\newcommand\be{\begin{equation}}
\newcommand\ee{\end{equation}}
\newcommand\R{{\mathbb R}}
\newcommand\implica{\quad\Rightarrow\quad}
\newcommand\G{\mathcal G}
\newcommand\vv{\textsc{v}}
\newcommand\ww{\textsc{w}}
\newcommand\mis{\mathop{\rm meas}}
\newcommand\eps{\varepsilon}
\newtheorem{theorem}{Theorem}[section]
\newtheorem{proposition}[theorem]{Proposition}
\newtheorem{lemma}[theorem]{Lemma}
\theoremstyle{remark}
\newtheorem{remark}[theorem]{Remark}
\newtheorem*{remark*}{Remark}
\theoremstyle{definition}
\newtheorem{example}[theorem]{Example}
\tikzstyle{nodo}=[circle,draw,fill,inner sep=0pt,minimum size=\widthof{k}]
\tikzstyle{infinito}=[circle,inner sep=0pt,minimum size=0mm]
\date{}
\newcommand\iii{$\infty$}
\newcommand\dx{{\,dx}}
\newcommand\dt{{\,dt}}
\title{NLS ground states on graphs}
\author{Riccardo Adami\thanks{Author partially supported by the FIRB 2012 project ``Dispersive dynamics: Fourier
Analysis and Variational Methods".}, Enrico Serra\thanks{Author
partially supported by the PRIN 2012 project ``Aspetti
variazionali e perturbativi nei problemi differenziali
nonlineari".}, Paolo Tilli \\ \ \\{\small  Dipartimento di Scienze
Matematiche ``G.L. Lagrange'', Politecnico di Torino } \\ {\small
Corso Duca degli Abruzzi, 24, 10129 Torino, Italy}}
\begin{document}

\maketitle

\begin{abstract}
We investigate the existence of ground states for the
subcritical NLS energy on metric graphs. In particular, we find out a
topological assumption that guarantees the nonexistence of ground
states, and give an example in which the  assumption is not fulfilled
and ground states actually exist.
In order to obtain the result, we introduce a new rearrangement
technique, adapted to the graph where it applies. Owing to such a
technique, the energy level of the rearranged function is improved by
conveniently mixing the symmetric and monotone rearrangement procedures.
\end{abstract}

\noindent{\small AMS Subject Classification: 35R02, 35Q55, 81Q35, 49J40, 58E30.}
\smallskip

\noindent{\small Keywords: Minimization, metric graphs, rearrangement, nonlinear Schr\"odinger \\ \hbox{} \hskip 1.65cm Equation.}

\section{Introduction}

In this paper we investigate the existence of a ground state for
the NLS energy functional
\begin{equation}
\label{NLSe}
E (u,\G)
  =  \frac 1 2 \| u' \|^2_{L^2 (\G)}
- \frac 1 p  \| u \|^p_{L^p (\G)}
\end{equation}
 with the {\em mass constraint}
\begin{equation}
\label{mass}
\| u \|^2_{L^2 (\G)} \ = \ \mu,
\end{equation}
where
 $\mu>0$ and $p\in (2,6)$ are given numbers
 and $\G$ is a \emph{connected metric graph}.

Here we present a rather informal description of the
problem and of the main results of the paper, whereas
a precise setting
and formal definitions are given in Section~\ref{sec:defi},

A metric graph $\G$  (\cite{berkolaiko, Friedlander,Kuchment})
is essentially a one-dimensional singular variety,
made up of several, possibly unbounded intervals (the edges of the
graph) some of whose endpoints
are glued together according to the topology of the graph.
The spaces
$L^p(\G)$, $H^1(\G)$ etc. are defined in the natural way. All
the functions we consider
are real valued: this is not restrictive, because
$E(|u|,\G)\leq E(u,\G)$ and any ground state is in fact real valued,
up to multiplication by a constant phase $e^{i\theta}$.

When $\G=\R$  the minimization problem
\begin{equation}
\label{minE}
\min E(u,\G),\quad
u\in H^1(\G),\quad
\| u\|_{L^2(\G)}^2=\mu
\end{equation}
is well understood and the minimizers, called \emph{solitons}, are known
explicitly. The same is true when $\G=[0,+\infty)$ is a half-line (the
minimizer being ``half a soliton'' of mass $2\mu$) and, to some extent,
when $\G=[a,b]$ is a bounded interval.
Much more interesting is the case when
$\G$ is non-compact and has a nontrivial topology with multiple junctions, loops and so on
(see Figure~\ref{Fig_gen}).
The aim of this paper is that of studying
existence and  qualitative properties of solutions
to \eqref{minE},  under quite
general assumptions on $\G$ (for papers devoted to particular graphs, see \cite{acfn12a, acfn14,cacciapuoti}).

Since when $\G$ is compact existence of minimizers for \eqref{minE} is
immediate, we focus on graphs where at least one edge is unbounded (a half-line),
so that the embeddings $H^1(\G)\hookrightarrow L^r(\G)$ are not compact and
existence for \eqref{minE} is non-trivial. In fact,
even though the infimum of $E(u,\G)$ is always trapped between two
finite values (Theorem~\ref{teoremzero}),
it turns
out that the existence of minimizers heavily depends on the topology of
$\G$: if, for instance, $\G$ consists of two half-lines with a ``double bridge" in between
(Figure~\ref{Figbridge}.a) then \eqref{minE} has \emph{no solution}, while if $\G$
is a straight line with one pendant attached to it (Figure~\ref{Figbridge}.b) then minimizers
\emph{do exist}.

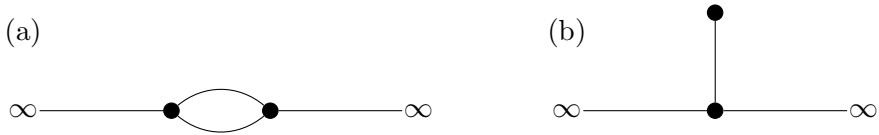
\begin{figure}[t]
\begin{center}
\begin{tikzpicture}
[scale=1.3,style={circle,
              inner sep=0pt,minimum size=7mm}]
%
%
\node at (-1.5,0) [infinito]  (1) {$\infty$};
\node at (0,0)[nodo] (2) {};
\node at (1,0) [nodo] (3) {};
\node at (2.5,0) [infinito] (4) {$\infty$};
\draw [-] (2) -- (1) ;
\draw [-] (4) -- (3) ;
\draw [-] (2) to [out=-40,in=-140] (3);
\draw [-] (2) to [out=40,in=140] (3);
\draw (-1.5,0.8) node {(a)};
  %
  %
\node at (4,0) [infinito]  (L) {$\infty$};
\node at (5.5,0)[nodo] (O) {};
\node at (7,0) [infinito] (R) {$\infty$};
\node at (5.5,1) [nodo] (V) {};
\draw [-] (L) -- (O) ;
\draw [-] (O) -- (R) ;
\draw [-] (O) -- (V) ;
\draw (4,0.8) node {(b)};
\end{tikzpicture}
\end{center}
\caption{(a) Two half-lines with a double-bridge in between.
(b) A straight line with one pendant attached to it.}
\label{Figbridge}
\end{figure}

Our results extend in two directions. First, we prove
a nonexistence result (Theorems~\ref{teoremunoA} and \ref{teoremunoB}) for a broad family of
non-compact graphs (see condition (H) in Section~\ref{sec:defi}): roughly speaking, if
no cut-edge of $\G$ segregates all the half-lines of $\G$ in the same connected component,
then \eqref{minE} has no solution, the only exceptions being certain graphs with a
particular topology which we characterize completely (see Example~\ref{example1}).
Moreover, for all these graphs
the unattained infimum of $E(u,\G)$ coincides with the NLS energy of a soliton
on the real line, for the same values of $\mu$ and $p$.

The mentioned condition on $\G$, that prevents existence
of minimizers in \eqref{minE}, is incompatible with the presence of a
bounded pendant edge
attached to $\G$.
This motivates
 a case study when $\G$ is the simplest non-compact graph with  one
 pendant,
namely
the graph in Figure~\ref{Figbridge}.b:
we prove that for this particular $\G$
problem \eqref{minE} does have a solution (Theorem~\ref{teoremdue}) and we establish
some qualitative properties of the minimizers (Theorem~\ref{teoremdueB}). In fact, in this case
the energy level of the minimizers is \emph{strictly lower} than the energy level of a soliton
on the real line, and any minimizer $u$ ---in order to reduce the energy level---
does exploit the topology of $\G$, in that $\sup u$ is attained at the tip of the
pendant: indeed, the fact that concentrating mass on the pendant is energetically convenient
prevents an a priori possible loss of compactness
of a minimizing sequence along a half-line, and existence can be proved.
In this respect, a key role
 is played
by a new rearrangement technique, introduced in the proof of Lemma~\ref{lemmaH}.
The interesting feature is that a \emph{hybrid} rearrangement of
a function
$u\in H^1(\G)$ is needed, adapted
to the topology of $\G$: \emph{high} values of $u$ are rearranged \emph{increasingly}
on the pendant, while \emph{small} values of $u$ are rearranged \emph{symmetrically}
on the straight line of $\G$.

It should be pointed out, however, that the presence of a bounded pendant attached
to $\G$ is not enough, alone, to guarantee solutions to \eqref{minE}.
The problem of characterizing all non-compact $\G$ such that \eqref{minE} has
a solution is certainly a challenging one, since the topology of
$\G$ alone is not enough to answer this question and, in general, also the \emph{metric}
properties of $\G$ (i.e. the lengths of it edges) play a relevant role. This issue
will be discussed in more detail in a forthcoming paper.

\bigskip

Among the physical motivations for this problem (see
\cite{noja} and references therein), nowadays the most topical is probably
given by the Bose-Einstein condensation (see \cite{stringari}).
It is  widely known that, under a critical
temperature, a boson gas undergoes a phase transition that leads a
large number $N$ of
particles into the same quantum state, represented by the {\em wave
  function}
 that minimizes
the {\em Gross-Pitaevskii energy} functional
\be \label{gp}
E_{GP} (\varphi, \Omega) \ = \ \| \varphi' \|^2_{L^2 (\Omega)}
+ 8 \pi \alpha  \| \varphi \|^4_{L^4 (\Omega)}
\ee
under the  the {\em normalization condition}
$
\| \varphi \|_{L^2 (\Omega)}^2 \ = \ N.
$
The real number
$\alpha$  is the scattering length associated
with the two-body
interaction between the particles in the gas. The functional
in \eqref{NLSe} corresponds to the case of a negative scattering length,
that is realized, for instance, by an attractive two-body interaction.
Besides, in \eqref{NLSe} we consider a general subcritical nonlinearity power.

\noindent
In \eqref{gp}, the domain $\Omega$
corresponds
to the shape of the (magnetic and/or optical) trap where the
gas has been confined in order to induce the phase transition. Present
technology allows various
shapes for traps, like discs, cigars, and so on. Recently, the possibility of
building ramified traps (\cite{kevrekidis,vidal}) has been
envisaged theoretically,
even though, at least to
our knowledge, they have not been experimentally realized
so far.

\noindent
To give a mathematical description
of such an experimental setting, one should choose a spatial domain
$\Omega$ that reproduces the shape of the trap and then minimize the energy
in \eqref{gp}. One would expect that, for branched traps, the domain
 $\Omega$ may be
replaced by a suitable graph $\G$. The possible ground state then
provides the state of the condensate in the trap $\Omega$, while the
absence of a ground state would in principle signal an instable
character of the system. For instance, in the situation depicted by
Theorems \ref{teoremzero}, \ref{teoremunoA}, the system would run away
along an infinite edge, mimicking the shape of a soliton.
For further results on nonlinear evolution on graphs, see e.g. \cite{ali, bona, smi,matrasulov}.

The paper is organized as follows: Section \ref{sec:defi}
contains a precise setting of the problem and
the
statements of the main results.
In Sections \ref{sec:prelim} and \ref{sec:aux} we discuss some
preliminary facts and techniques (in particular, rearrangements on graphs)
and some auxiliary statements that may have some interest in themselves.
Finally, Sections \ref{sec:proofs1} and \ref{sec:baffo} contain the proofs
of the results stated in Section~\ref{sec:defi}.

\emph{Remark on Figures:} some figures have been included to better describe the
topology of certain metric graphs. In these pictures, vertices ``at infinity'' are denoted
by the symbol $\infty$, while ordinary vertices (i.e. junctions of two or more
edges) are denoted by a bullet.
\section{Setting, notation and main results} \label{sec:defi}

Although we shall not need deep results from graph theory,
the notion of graph is central to this paper: we refer the reader to
\cite{balakrishnan,bollobas} for a modern account on the subject.

Throughout the paper a graph is always
meant as a (connected) \emph{multigraph},
that is, we allow for multiple edges joining the same pair of vertices.
Self-loops (i.e. edges starting and ending at the same vertex) are also
allowed. More precisely,
the central objects of the paper
are \emph{metric graphs} (see \cite{Kuchment,
Friedlander}), i.e. (connected) graphs $\G=(V,E)$ where
each edge $e\in E$ is
associated with either
a closed bounded interval $I_e=[0,\ell_e]$ of length $\ell_e>0$,
or a closed half-line $I_e=[0,+\infty)$, letting $\ell_e=+\infty$
in this case. Two edges $e,f\in E$ joining the same pair of vertices, if present,
are \emph{distinct objects} in all respects: in particular, the corresponding
intervals $I_e$ and $I_f$ need not have the same length, and must be considered
\emph{distinct} even in the case where $\ell_e=\ell_f$.
For every $e\in E$ joining two vertices $\vv_1,\vv_2\in V$,
a coordinate $x_e$ is chosen along $I_e$, in such a way that $\vv_1$
corresponds to $x_e=0$ and $\vv_2$ to $x_e=\ell_e$, or \emph{viceversa}:
if $\ell_e=+\infty$, however, we always assume that the half-line $I_e$
is attached to the remaining part of the graph at $x_e=0$, and the vertex
of the graph corresponding to $x_e=+\infty$ is called a \emph{vertex at infinity}.
The subset of $V$ consisting of all vertices at infinity will be denoted by
$V_\infty$.
With this respect, we shall always assume that
\begin{equation}
\label{H0}
\text{all vertices at infinity of $\G$ (if any) have degree one}
\end{equation}
where, as usual, the degree
 of a vertex $\vv\in V$ is the
number of edges incident at $\vv$ (counting twice any self-loop at $\vv$, of course).
Finally, the cardinalities of $E$ and $V$
are assumed to be finite. An example of a typical metric graph $\G$ is given in
Figure~\ref{Fig_gen}.

\medskip

\noindent A connected metric graph $\G$ has the natural structure of a locally compact
metric space, the metric being given by the shortest distance measured along the
edges of the graph. Observe that
\begin{equation}
\label{comp}
\text{$\G$ is compact $\iff$ no edge of $\G$ is a half-line $\iff$ $V_\infty=\emptyset$.}
\end{equation}
With some abuse of notation, we often identify an edge $e$ with the corresponding
interval $I_e$:
thus, topologically, the \emph{metric space}
$\G$ is the \emph{disjoint} union $\bigsqcup I_e$
of its edges, with some of their endpoints \emph{glued together} into a single point
(corresponding to a vertex $\vv\in V\setminus V_\infty$),
according to the topology of the \emph{graph} $\G$ (using the same
symbol $\G$ for both the metric graph and the induced metric space
should cause no confusion).
We point out  that
any vertex at infinity $\vv\in V_\infty$ is of course a vertex of the graph $\G$,
but is \emph{not} a point of the metric space $\G$ (this is consistent with
\eqref{H0}).

\begin{figure}[t]
\begin{center}
\begin{tikzpicture}
[scale=1.3,style={circle,
              inner sep=0pt,minimum size=7mm}]
\node at (0,0) [nodo] (1) {};
\node at (-1.5,0) [infinito]  (2){$\infty$};
\node at (1,0) [nodo] (3) {};
\node at (0,1) [nodo] (4) {};
\node at (-1.5,1) [infinito] (5) {$\infty$};
\node at (2,0) [nodo] (6) {};
\node at (3,0) [nodo] (7) {};
\node at (2,1) [nodo] (8) {};
\node at (3,1) [nodo] (9) {};
\node at (4.5,0) [infinito] (10) {$\infty$};
\node at (5.5,0) [infinito] (11) {$\infty$};
\node at (4.5,1) [infinito] (12) {$\infty$};
\draw [-] (1) -- (2) ;
 \draw [-] (1) -- (3);
 \draw [-] (1) -- (4);
 \draw [-] (3) -- (4);
 \draw [-] (5) -- (4);
 \draw [-] (3) -- (6);
 \draw [-] (6) -- (7);
 \draw [-] (6) to [out=-40,in=-140] (7);
\draw [-] (3) to [out=10,in=-35] (1.4,0.7); 
\draw [-] (1.4,0.7) to [out=145,in=100] (3); 
 \draw [-] (6) to [out=40,in=140] (7);
 \draw [-] (6) -- (8);
 \draw [-] (6) to [out=130,in=-130] (8);
 \draw [-] (7) -- (8);
 \draw [-] (8) -- (9);
  \draw [-] (7) -- (9);
  \draw [-] (9) -- (12);
  \draw [-] (7) -- (10);
  \draw [-] (7) to [out=40,in=140] (11);
\end{tikzpicture}
\end{center}
\caption{A metric graph with $5$ half-lines and $13$ bounded edges, one of which
forms a self-loop.}\label{Fig_gen}
\end{figure}

With $\G$ as above, a function $u:\G\to\R$ can be regarded as a
bunch of functions $(u_e)_{e\in E}$, where $u_e:I_e\to\R$ is the
restriction of $u$ to the edge $I_e$.
Endowing each edge $I_e$ with Lebesgue measure, one can define
$L^p$ spaces over $\G$ in the natural way, with norm
\[
\| u\|_{L^p(\G)}^p=\sum_{e\in E} \| u_e\|_{L^p(I_e)}^p,\quad
u=(u_e).
\]
Similarly, the Sobolev space $H^1(\G)$ is defined
as the set
of those functions $u:\G\to\R$ such that
\begin{equation}
\label{eq:18}
\text{$u=(u_e)$ is continuous on $\G$, and
$u_e\in H^1(I_e)$
for every edge $e\in E$,}
\end{equation}
with the natural norm
\[
\Vert u\Vert_{H^1(\G)}^2=\int_{\G}
\bigl(|u'(x)|^2+|u(x)|^2\bigr)\,dx=
\sum_{e\in E} \int_{I_e} \bigl(|u_e'(x_e)|^2
+|u_e(x_e)|^2\bigr)\,dx_e.
\]
Note that $H^1(\G)$ can be identified
with a closed subspace (determined by the continuity of $u$
at the vertices of $\G$)
of the Cartesian product $\bigoplus_{e} H^1(I_e)$.
In terms of the coordinate
system $\{x_e\}$, continuity on $\G$ means that, whenever
two edges $e,f$ meet at a vertex $\vv$ of $\G$, the corresponding branches
of $u$ satisfy a no-jump condition of the kind $u_e(0)=u_f(0)$ (or
$u_e(\ell_e)=u_f(\ell_f)$, or $u_e(\ell_e)=u_f(0)$ etc., depending on the orientation
of $I_e,I_f$ induced
by the coordinates $x_e,x_f$).
Notice that, according to \eqref{H0}, vertices at infinity are never involved in these continuity
conditions: on the other hand, if $u\in H^1(\G)$,
then automatically
\begin{equation}
\label{zeroallinfinito}
I_e=[0,+\infty)\implica
\lim_{x_e\to +\infty} u_e(x_e)=0,
\end{equation}
because in particular $u_e\in H^1(I_e)$.

\medskip

\noindent Within this framework, we are now in a position to state our main
results. Fix $\G$ as above, and numbers $\mu,p$ satisfying
\begin{equation}
\label{mup}
\mu>0\quad\text{and}\quad 2<p<6.
\end{equation}
For $u\in H^1(\G)$, the NLS energy in \eqref{NLSe} is finite and takes the concrete form
\[
E(u,\G)=\frac 1 2 \sum_e \int_{I_e} |u_e'(x_e)|^2\,dx_e-
\frac 1 p\sum_e \int_{I_e} |u_e(x_e)|^p\,dx_e.
\]
If we let
\begin{equation}
\label{h1m}
H_\mu^1 (\G)  : =  \bigl\{ u \in H^1 (\G)\quad\text{such that}\quad \| u \|_{L^2(\G)}^2 =
\mu \bigr\},
\end{equation}
the minimization problem \eqref{minE} takes the compact form
\begin{equation}
\label{minE2}
\min_{u\in H^1_\mu(\G)} E(u,\G).
\end{equation}

\begin{remark}\label{remclassic}
%
The classical instance of \eqref{minE2} where $\G$ is the real line $\R$
falls within our framework
as a particular case, when
$\G$ is
made up of \emph{two} unbounded edges (half-lines), joined at their initial point
(Figure~\ref{figtorre}.a).

In this case, the solutions to \eqref{minE2}
are called \emph{solitons}, and are known to be unique up to translations
and a change of sign. In particular,
there is a unique minimizer which is a \emph{positive and even function}: we shall denote
this function by $\phi_\mu$ (the dependence on $p$ being understood), and of course
\begin{equation}
\label{ensol}
E(\phi_\mu,\R)=\min_{\phi\in H^1_\mu(\R)} E(\phi,\R)<0.
\end{equation}
It is well known  that solitons obey the scaling rule
\begin{equation}
\label{scalingrule}
\phi_\mu(x)=
\mu^\alpha \phi_1\bigl ( \mu^\beta x\bigr ),\quad
\alpha=\frac 2{6-p},\quad
\beta=\frac {p-2} {6-p}
\end{equation}
where $\alpha,\beta>0$ by \eqref{mup}, and
$\phi_1(x)=C_p \mathop{\rm sech}(c_p x)^{\alpha/\beta}$ with $C_p,c_p>0$.
\end{remark}
The following is a general result for non-compact graphs.
\begin{theorem}\label{teoremzero}
If $\G$ contains at least one half-line, then
\begin{equation}
\label{eq:14bi}
\inf_{u\in H^1_\mu(\G)} E(u,\G)\leq
\min_{\phi\in H^1_\mu(\R)} E(\phi,\R)=
E(\phi_\mu,\R)
\end{equation}
and, in the other direction,
\begin{equation}
\label{eq:14tri}
\inf_{u\in H^1_\mu(\G)} E(u,\G)\geq
\min_{\phi\in H^1_\mu(\R^+)} E(\phi,\R^+)=
\frac 1 2 E(\phi_{2\mu},\R).
\end{equation}
\end{theorem}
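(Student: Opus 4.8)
The plan is to establish the two inequalities separately: the upper bound \eqref{eq:14bi} by exhibiting a minimizing-type sequence concentrated on one half-line, and the lower bound \eqref{eq:14tri} by a decreasing rearrangement of an arbitrary competitor onto the half-line $\R^+$.

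For the upper bound I would fix a half-line $H\cong[0,+\infty)$ of $\G$, attached to the rest of the graph at $x=0$, and run a soliton out to infinity along it. Choose a cutoff $\chi$ with $\chi\equiv0$ near $0$ and $\chi\equiv1$ outside a bounded set, $0\le\chi\le1$, and set $u_t(x)=\chi(x)\,\phi_\mu(x-t)$ on $H$ and $u_t\equiv0$ on every other edge. Since $u_t$ vanishes near the attaching vertex it is continuous on $\G$, hence $u_t\in H^1(\G)$. As $t\to+\infty$ the peak of $\phi_\mu(\cdot-t)$ is pushed far inside $H$, so by the exponential decay of the soliton (recall the $\mathop{\rm sech}$ form in Remark~\ref{remclassic}) the cutoff region only sees values of $\phi_\mu$ far from its center; consequently $\|u_t\|_{L^2(\G)}^2\to\mu$ and, controlling also the $\chi'$ cross terms, $E(u_t,\G)\to E(\phi_\mu,\R)$. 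Renormalizing to exact mass $\mu$ multiplies $u_t$ by a factor tending to $1$ and does not affect the limit, whence $\inf_{u\in H^1_\mu(\G)}E(u,\G)\le E(\phi_\mu,\R)$.

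For the lower bound the idea is to compress any competitor onto $\R^+$ without raising the energy. Given $u\in H^1_\mu(\G)$, I may assume $u\ge0$ because $E(|u|,\G)\le E(u,\G)$, and I consider the decreasing rearrangement $\hat u$ of $u$ onto $[0,+\infty)$, built from the distribution function $\rho(\tau)=\mis\{x\in\G:u(x)>\tau\}$. By equimeasurability $\hat u$ has the same $L^2$ mass $\mu$ and the same $L^p$ norm as $u$, so everything reduces to the P\'olya--Szeg\H{o} inequality $\|\hat u'\|_{L^2(\R^+)}^2\le\|u'\|_{L^2(\G)}^2$. This I would extract from the coarea formula: writing $n(\tau)$ for the (a.e.\ finite) number of preimages of $\tau$ on $\G$, one has $-\rho'(\tau)=\sum_{x\in u^{-1}(\tau)}|u'(x)|^{-1}$, and Cauchy--Schwarz gives $\sum_{x\in u^{-1}(\tau)}|u'(x)|\ge n(\tau)^2/(-\rho'(\tau))$. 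Since $\hat u$ is strictly decreasing on its support it has exactly one preimage per level, so the same computation yields $\|\hat u'\|_{L^2(\R^+)}^2=\int_0^{\sup u}(-\rho'(\tau))^{-1}\,d\tau$, while $\|u'\|_{L^2(\G)}^2\ge\int_0^{\sup u} n(\tau)^2(-\rho'(\tau))^{-1}\,d\tau$; the inequality follows at once from $n(\tau)\ge1$. Hence $E(u,\G)\ge E(\hat u,\R^+)\ge\inf_{\phi\in H^1_\mu(\R^+)}E(\phi,\R^+)$, and taking the infimum over $u$ gives \eqref{eq:14tri}.

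Finally I would identify the two extremal values. The equality $\min_{\phi\in H^1_\mu(\R)}E(\phi,\R)=E(\phi_\mu,\R)$ is recalled in Remark~\ref{remclassic}, and $\min_{\phi\in H^1_\mu(\R^+)}E(\phi,\R^+)=\tfrac12E(\phi_{2\mu},\R)$ follows because even reflection is a bijection from $H^1_\mu(\R^+)$ onto the even functions of $H^1_{2\mu}(\R)$ that doubles both mass and energy, and the soliton $\phi_{2\mu}$ is itself even, hence minimizes over all of $H^1_{2\mu}(\R)$. The main obstacle is the rearrangement step for the lower bound: the decisive gain is that on a graph every level set meets at least one point, i.e.\ $n(\tau)\ge1$, which is exactly why $\R^+$ (rather than $\R$) is the correct comparison space; the technical work lies in justifying the coarea identities rigorously (finiteness of $n(\tau)$ for a.e.\ $\tau$, validity of the distributional formula for $\rho'$, and $\hat u\in H^1_\mu(\R^+)$).
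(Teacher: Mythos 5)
Your proposal is correct and follows essentially the same route as the paper: the upper bound by sliding a (near-)soliton to infinity along one half-line extended by zero to the rest of $\G$, and the lower bound by the decreasing rearrangement onto $\R^+$ together with equimeasurability, plus the reflection identity $\min_{H^1_\mu(\R^+)}E(\cdot,\R^+)=\tfrac12 E(\phi_{2\mu},\R)$. The only cosmetic differences are that the paper uses compactly supported $H^1$-approximations of $\phi_\mu$ instead of a cutoff times a translated soliton (thus avoiding the cross-term estimates), and it invokes its Proposition~\ref{proprear} (the graph P\'olya--Szeg\H{o} inequality, quoted from the literature) rather than re-deriving it via the coarea formula as you sketch.
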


In order to investigate
whether the infimum in \eqref{eq:14bi} is attained or not, the following
structure assumption on the graph $\G$ will play a crucial role:

\begin{itemize}
\item[(H)] After removal of any edge
$e\in E$, \emph{every} connected component of the \emph{graph} $(V,E\setminus\{e\})$ contains
at least one vertex $\vv\in V_\infty$.
\end{itemize}

Some remarks are in order. Firstly, (H) entails that $\G$ has \emph{at least} one
vertex at infinity $\vv_1\in V_\infty$, whence $\G$ is not compact
by \eqref{comp}. Secondly, the condition on
$e$ is relevant only when $e$ is a \emph{cut-edge} for $\G$ (i.e. when the
removal of $e$ disconnects $\G$), because when $(V,E\setminus\{e\})$
is connected the presence of $\vv_1\in V_\infty$ makes the condition trivial. On the other hand,
the edge $e$ (half-line) that has $\vv_1$ as vertex at infinity is necessarily
a cut-edge, since by \eqref{H0} its removal leaves vertex $\vv_1$ \emph{isolated}
in the graph $(V,E\setminus\{e\})$: therefore, the other connected component
necessarily contains a vertex at infinity $\vv_2\not=\vv_1$. Hence we see that,
in particular,
\begin{equation}
\label{almenodue}
\text{(H)}\quad\Rightarrow\quad
\text{$\G$ has at least two vertices at infinity.}
\end{equation}
Roughly speaking, assumption (H) says that there is always a vertex at infinity
on \emph{both sides} of any cut-edge.
Any cut-edge $e$ that violates (H),
would therefore
leave \emph{all} the vertices at
infinity on the same connected component
thus forming a sort of ``bottleneck'',
as regards the location of $V_\infty$ relative to $e$.
Thus,
in a sense, we may consider (H) as a no-bottleneck condition on $\G$.
Finally,
the fact that
(H) concerns cut-edges only makes it easy to test algorithmically, when the topology
of $\G$ is intricate: for instance, one can easily check that the graph in Figure~\ref{Fig_gen}
satisfies (H).

Under assumption (H), the inequality in \eqref{eq:14bi} is in fact an equality:
\begin{theorem} \label{teoremunoA}
If $\G$ satisfies (H), then
\begin{equation}
\label{infuguale}
\inf_{u\in H^1_\mu(\G)} E(u,\G)=
\min_{\phi\in H^1_\mu(\R)} E(\phi,\R)=
E (\phi_\mu, \R).
\end{equation}
\end{theorem}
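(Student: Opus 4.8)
The plan is to prove the equality \eqref{infuguale} by establishing the reverse inequality to \eqref{eq:14bi}, namely that under (H) we also have
\[
\inf_{u\in H^1_\mu(\G)} E(u,\G)\geq E(\phi_\mu,\R).
\]
Since Theorem~\ref{teoremzero} already gives the ``$\leq$'' direction, this will close the gap. The natural strategy is to show that for \emph{every} $u\in H^1_\mu(\G)$ one can produce a competitor $\phi$ on the real line with the same mass $\mu$ and no larger energy, so that $E(u,\G)\geq E(\phi,\R)\geq E(\phi_\mu,\R)$. I expect this competitor to be built by a rearrangement of $u$ onto $\R$, and the role of assumption (H) will be precisely to guarantee that such a rearrangement does not increase the energy.

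First I would recall the standard rearrangement inequalities on the line: for the symmetric decreasing rearrangement $u^*$ one has $\|u^*\|_{L^q}=\|u\|_{L^q}$ for all $q$ (so mass and the $L^p$ term are preserved) together with a Pólya--Szeg\H o inequality $\|(u^*)'\|_{L^2}\leq\|u'\|_{L^2}$, which would lower (or preserve) the kinetic energy. On a single interval or half-line such rearrangements are classical; the subtlety here is that $u$ lives on a graph with junctions, so a naive rearrangement may create or destroy ``multiplicity'' in the level sets. The key quantitative fact I would use is that the Pólya--Szeg\H o inequality on a graph degrades according to how many preimages a typical level has: if the superlevel set $\{|u|>t\}$ meets $N(t)$ distinct edges, the decrease in the Dirichlet integral under symmetric rearrangement onto $\R$ picks up a favorable factor governed by $N(t)$. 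This is exactly the kind of statement the authors flag as developed in Section~\ref{sec:prelim} (rearrangements on graphs), so I would invoke it as a black box.

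The heart of the argument, and the place where (H) enters, is the following: assumption (H) says that every cut-edge has a vertex at infinity on both sides, equivalently there is no ``bottleneck'' trapping all of $V_\infty$ on one side. I would argue that this forces the level sets of any $u\in H^1_\mu(\G)$ to have \emph{multiplicity at least two} for almost every value $t\in(0,\sup u)$: concretely, for a.e.\ level $t$ the superlevel set $\{|u|>t\}$ must reach at least two distinct half-lines, or more precisely a connected path from a point where $|u|>t$ out to infinity must be doubled because no single cut-edge can segregate all the ends. Under this multiplicity-two property, the graph rearrangement produces a symmetric decreasing function $\phi$ on $\R$ of mass $\mu$ with $\|\phi'\|_{L^2}^2\leq\|u'\|_{L^2}^2$ and $\|\phi\|_{L^p}^p=\|u\|_{L^p}^p$, whence $E(\phi,\R)\leq E(u,\G)$. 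Taking the infimum over $u$ and comparing with the soliton then yields \eqref{infuguale}.

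The main obstacle, I expect, will be the level-set multiplicity lemma: turning the purely \emph{topological} no-bottleneck condition (H) into the \emph{measure-theoretic} statement that almost every level of every admissible $u$ is attained on at least two edges reaching out toward infinity. One has to handle the interplay between connectivity of the graph, the boundary values at junctions, and the fact that $u\to 0$ at every vertex at infinity by \eqref{zeroallinfinito}; in particular one must rule out the degenerate exceptional topologies (the graphs of Example~\ref{example1}) where the multiplicity can drop to one for high levels, and check that on such exceptional graphs the conclusion still holds by a direct comparison rather than via strict improvement. Once the multiplicity bound is in hand, the remaining steps—applying the graph Pólya--Szeg\H o inequality and comparing with the known soliton minimizer—are essentially routine.
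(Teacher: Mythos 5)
Your proposal follows essentially the same route as the paper's own proof: symmetrically rearrange each $u\in H^1_\mu(\G)$ onto $\R$, use a multiplicity bound $N(t)\geq 2$ for a.e.\ level $t$ (derived from (H)) to invoke the graph P\'olya--Szeg\H{o} inequality \eqref{eq:ens2}, conclude $E(u,\G)\geq E(\widehat u,\R)\geq E(\phi_\mu,\R)$, and close with the upper bound \eqref{eq:14bi} of Theorem~\ref{teoremzero}. The step you defer as your ``main obstacle'' --- converting the purely topological condition (H) into the measure-theoretic multiplicity statement --- is exactly what the paper's Lemma~\ref{lemmaHprime} supplies: (H) implies that from \emph{every} point $x_0\in\G$ (in particular from a point where $u$ attains its maximum $M$) one can run two injective arclength-parameterized curves $\gamma_1,\gamma_2$ to infinity whose images meet in at most finitely many points; composing $u$ with these curves and using $u\to 0$ at infinity (by \eqref{zeroallinfinito}) produces two preimages of every $t\in(0,M)$, which are distinct for a.e.\ $t$. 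Be aware that this lemma is a genuine graph-theoretic argument (trails whose first and last edges are half-lines are shown to cover $\G$, cut-edges being handled by (H) and non-cut-edges by inserting the cycle they belong to), so the one-line appeal to ``no cut-edge segregates all the ends'' does not by itself dispatch it. Two smaller corrections: first, the multiplicity claim should not be phrased via superlevel sets reaching two half-lines --- for $t>0$ the set $\{u>t\}$ is bounded (indeed $u\to0$ at infinity), and on a star graph with the maximum located deep inside one half-line that set meets only one half-line; what matters is the number of preimages of the level $t$, found along the two curves issuing from the maximum point. Second, the graphs of Example~\ref{example1} need no separate treatment here: they satisfy (H), the bound $N(t)\geq 2$ holds for them as well, and they become exceptional only in Theorem~\ref{teoremunoB}, where attainment of the infimum forces $N(t)=2$ exactly and $\widehat u$ to be a soliton.
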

\begin{remark*}
Hypothesis (H) and hence Theorem \ref{teoremunoA} apply to
examples of graphs
previously treated in the literature: star-graphs with
unbounded edges (\cite{acfn12a}) and general multiple bridges
(\cite{acfn14}). Furthermore,
they apply to any
semi-eulerian graph
with two vertices at infinity,  as well as
to more complicated networks like the
one represented in Figure \ref{Fig_gen}.
\end{remark*}

It is easy to construct examples of graphs $\G$ satisfying (H),
for which the infimum in \eqref{infuguale} is achieved.

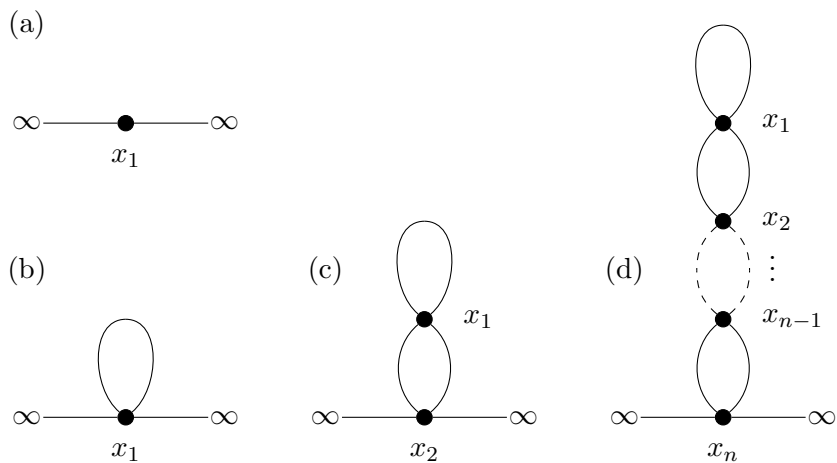
\begin{figure}[h]
\begin{center}
\begin{tikzpicture}
[scale=1.3,style={circle,
              inner sep=0pt,minimum size=7mm}]
\node at (0,3) [nodo] [label=below:$x_1$]  (OO)  {};
\node at (-1,3) [infinito] (RR) {\iii};
\node at (1,3) [infinito] (LL) {\iii};
\draw [-] (LL) -- (OO) ;
\draw [-] (RR) -- (OO) ;
\draw (-1,4) node {(a)};
\node at (0,0) [nodo] [label=below:$x_1$]  (O)  {};
\node at (-1,0) [infinito] (R) {\iii};
\node at (1,0) [infinito] (L) {\iii};
\draw [-] (L) -- (O) ;
\draw [-] (R) -- (O) ;
 \draw [-] (O) to [out=40,in=0] (0,1);
 \draw [-] (O) to [out=140,in=180] (0,1);
\draw (-1,1.5) node {(b)};
\end{tikzpicture}\quad\quad
\begin{tikzpicture}
[scale=1.3,style={circle,
              inner sep=0pt,minimum size=7mm}]
\node at (0,0) [nodo] [label=below:$x_2$] (O)  {};
\node at (1,0) [infinito] (L) {\iii};
\node at (-1,0) [infinito] (R) {\iii};
\node at (0,1) [nodo] [label=right:$\quad x_1$] (1)  {};
\draw [-] (L) -- (O) ;
\draw [-] (R) -- (O) ;
\draw [-] (1) to [out=40,in=0] (0,2);
\draw [-] (1) to [out=140,in=180] (0,2);
\draw [-] (O) to [out=40,in=-40] (1);
\draw [-] (O) to [out=140,in=-140] (1);
\draw (-1,1.5) node {(c)};
\end{tikzpicture}\quad\quad
\begin{tikzpicture}
[scale=1.3,style={circle,
              inner sep=0pt,minimum size=7mm}]
\node at (0,0) [nodo] [label=below:$x_n$] (O)  {};
\node at (1,0) [infinito] (L) {\iii};
\node at (-1,0) [infinito] (R) {\iii};
\node at (0,1) [nodo] [label=right:$\quad x_{n-1}$] (1)  {};
\node at (0,3) [nodo] [label=right:$\quad x_{1}$] (3)  {};
\node at (0,2) [nodo] [label=right:$\quad x_{2}$] (2)  {};
\draw (0.5,1.6) node {$\vdots$};
\draw [-] (L) -- (O) ;
\draw [-] (R) -- (O) ;
\draw [-] (3) to [out=40,in=0] (0,4);
\draw [-] (3) to [out=140,in=180] (0,4);
\draw [-] (O) to [out=40,in=-40] (1);
\draw [-] (O) to [out=140,in=-140] (1);
\draw [-] (2) to [out=40,in=-40] (3);
\draw [-] (2) to [out=140,in=-140] (3);
\draw [dashed] (1) to [out=40,in=-90] (0.267,1.5);
\draw [dashed] (1) to [out=140,in=-90] (-0.267,1.5);
\draw [dashed] (2) to [out=-40,in=90] (0.267,1.5);
\draw [dashed] (2) to [out=-140,in=90] (-0.267,1.5);
  %
\draw (-1,1.5) node {(d)};
\end{tikzpicture}
\end{center}
\caption{Graphs described in Example \ref{example1}, for which \eqref{eq:14bi} is an equality.}
\label{figtorre}
\end{figure}

\begin{example}\label{example1} \emph{(a)}
If $\G$ is isometric to $\R$ (see Remark~\ref{remclassic} and Figure~\ref{figtorre}.a), then
the soliton $\phi_\mu$ can be seen as an element of $H^1_\mu(\G)$,
and by \eqref{ensol} the infimum in \eqref{infuguale} is achieved.

\medskip

\noindent\emph{(b)} The symmetry of the soliton $\phi_\mu\in H^1(\R)$ can be exploited to
construct other examples. Given $a_1>0$, let $\G$ be
the quotient space $\R/\{\pm a_1\}$, obtained by gluing together the two points $a_1$ and
$-a_1$ into a unique point $x_1$. As a metric graph, $\G$ is depicted in
Figure~\ref{figtorre}.b, the length of the loop being $2a_1$. Since $\phi_\mu(a_1)=\phi_\mu(-a_1)$,
$\phi_\mu$ can be seen as an element of $H^1_\mu(\G)$, letting $x=0$ correspond to the north pole
of the loop in Figure~\ref{figtorre}.b. As before,
by \eqref{ensol} the infimum in \eqref{infuguale} is achieved.

\medskip

\noindent\emph{(c)} More generally, for $n\geq 2$
fix $a_n>\ldots> a_1>0$,  and let $\G$ be obtained from $\R$ by gluing together
each pair of points $\pm a_1,\ldots,\pm a_n$, the corresponding new points
being denoted $\{x_j\}$. As a metric graph, $\G$ is as
in Figure~\ref{figtorre}.d (the length of the loop at the top being $2a_1$,
while the pairs of parallel edges have lengths $a_j-a_{j-1}$, $2\leq j\leq n$).
Since $\phi_\mu(a_i)=\phi_\mu(-a_i)$,
reasoning as in (b)
we see that the infimum in \eqref{infuguale} is attained.$\quad\square$
\end{example}
In fact, the graphs of the previous example are the \emph{only ones} for which
the infimum is attained.
\begin{theorem} \label{teoremunoB}
If $\G$ satisfies (H) then \eqref{infuguale} holds
true,
but the infimum is never achieved unless $\G$
is isometric to one of the graphs discussed in Example~\ref{example1}.
\end{theorem}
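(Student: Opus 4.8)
The plan is to show that the equality $E(u,\G)=E(\phi_\mu,\R)$ for a putative minimizer $u\in H^1_\mu(\G)$ is so rigid that it forces the metric structure of $\G$ completely. First I would reduce to a nonnegative minimizer: since $E(|u|,\G)\le E(u,\G)$ I may assume $u\ge 0$, and by \eqref{almenodue} together with \eqref{zeroallinfinito} the function $u$ decays to $0$ along each of the (at least two) half-lines. The main tool is the symmetric decreasing rearrangement $u^*$ of $u$ onto the whole line, for which $\|u^*\|_{L^2(\R)}^2=\mu$ and $\|u^*\|_{L^p(\R)}^p=\|u\|_{L^p(\G)}^p$, combined with a P\'olya--Szeg\H{o} inequality on graphs. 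The quantitative ingredient is the counting function $N(t)=\#\{x\in\G:u(x)=t\}$: the coarea formula together with Cauchy--Schwarz gives
\[
\|u'\|_{L^2(\G)}^2\ \ge\ \int_0^{\max u}\frac{N(t)^2}{-\rho'(t)}\,dt,\qquad \rho(t)=\mis\{u>t\},
\]
whereas the even, unimodal function $u^*$ realizes the same integral with $N\equiv 2$.

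The topological heart of the argument is the claim that $N(t)\ge 2$ for a.e.\ $t\in(0,\max u)$, which I would deduce directly from (H). If some regular level $t$ had a connected component $C$ of $\{u>t\}$ with a single boundary point $p$, then $p$ would cut $C$ off from the rest of $\G$; since $u\to 0$ along the half-lines, $C$ contains no vertex at infinity, so removing the edge through $p$ would leave a connected component of $(V,E\setminus\{e\})$ with no vertex in $V_\infty$, contradicting (H). Hence every component of $\{u>t\}$ has at least two boundary points, so $N(t)\ge 2$. Comparing the two displays yields $\|(u^*)'\|_{L^2(\R)}^2\le\|u'\|_{L^2(\G)}^2$, hence $E(u^*,\R)\le E(u,\G)$; since $u^*\in H^1_\mu(\R)$ and, by Theorem~\ref{teoremunoA}, $E(u,\G)=E(\phi_\mu,\R)=\min_{H^1_\mu(\R)}E(\cdot,\R)$, all inequalities collapse to equalities.

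I would then exploit the two equalities separately. From $E(u^*,\R)=E(\phi_\mu,\R)$ and the uniqueness of the even positive minimizer on $\R$ (Remark~\ref{remclassic}) I obtain $u^*=\phi_\mu$; in particular $u$ and $\phi_\mu$ share the same distribution function $\rho$, so $\max u=\phi_\mu(0)$ is attained at a single point and the tails have full, infinite support. From equality in the P\'olya--Szeg\H{o} estimate I extract the rigidity: $N(t)=2$ for a.e.\ $t$ (equality in $N\ge 2$), and equality in Cauchy--Schwarz at each level, i.e.\ $|u'|$ takes the \emph{same} value at the two preimages of almost every $t$; matching with $\rho=\rho_{\phi_\mu}$ forces $|u'(x)|=|\phi_\mu'(\phi_\mu^{-1}(u(x)))|$ a.e. In other words, along any direction of descent $u$ reproduces the soliton profile exactly, and for a.e.\ $t$ the superlevel set $\{u>t\}$ is \emph{connected} with exactly two boundary points.

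The final and hardest step is to reconstruct $\G$ from these facts and identify it with one of the graphs of Example~\ref{example1}. Starting from the unique peak, I would follow the two descending trajectories: because both carry the soliton profile and remain at a common level $t$, they traverse equal arc-length for each drop in $t$. Tracking the boundary count of $\{u>t\}$ across vertices then shows that a genuine branching vertex must have degree four, with exactly two incident edges along which $u$ increases (towards the peak) and two along which it decreases; the two decreasing edges form a doubled edge of equal lengths leading to the next such vertex at a common, strictly lower level, while degree-two vertices are inessential and can be absorbed. (One checks, as a consistency, that the Kirchhoff condition holds automatically at such a vertex, since the two ascending and two descending slopes all equal $|\phi_\mu'|$.) Because the soliton has infinite tails, the two bottom trajectories must have infinite length, so they are exactly two half-lines, and $N(t)=2$ for small $t$ guarantees there are no others. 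Iterating, $\G$ is forced to be a tower of doubled edges—capped by a self-loop containing the peak, or running straight through in the case $\G\cong\R$—terminating in two half-lines, precisely the result of gluing the symmetric soliton pairs $\pm a_1,\dots,\pm a_n$ as in Example~\ref{example1}(a)--(d). The main obstacle is exactly this reconstruction: one must rigorously control the topology change of the superlevel sets at vertices and at critical points of $u$, rule out alternative configurations (extra branches, loops closing at unequal levels, desynchronized descents), and verify that the forced edge lengths coincide with the soliton gluing lengths, thereby producing an explicit isometry between $\G$ and one of the graphs of Example~\ref{example1}.
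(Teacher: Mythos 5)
Your first half is correct, and it actually takes a genuinely different route from the paper. You obtain $N(t)\geq 2$ by a separation argument: a superlevel component $C$ of $\{u>t\}$ with a single boundary point $p$ (for a level avoiding the finitely many values $u$ takes at vertices, so that $p$ is interior to an edge $e$) would trap every other edge incident to its vertices inside $C$, so removing $e$ would leave a graph component with no vertex in $V_\infty$, contradicting (H). That works. The paper instead proves a structural lemma (Lemma~\ref{lemmaHprime}): (H) implies condition (H$'$), the existence, through \emph{every} point of $\G$, of two injective curves $\gamma_1,\gamma_2$ parameterized by arclength, going to infinity, with images disjoint except for finitely many points. Both mechanisms yield $E(u,\G)\geq E(\widehat u,\R)$ and, in the equality case, $N(t)=2$ a.e.\ and $\widehat u=\phi_\mu$ (note the paper writes $\widehat u$ for the symmetric rearrangement and reserves $u^*$ for the monotone one); your additional Cauchy--Schwarz rigidity (equal slopes at the two preimages) is a correct extra piece of information.

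The genuine gap is in your last step, and it is not incidental: it is the crux of the theorem. In the paper, the curves of (H$'$) are not merely a device for $N(t)\geq2$; they are the scaffolding for the whole classification. Taking $\gamma_1,\gamma_2$ based at the maximum point, the paper glues $u\circ\gamma_1$ and $u\circ\gamma_2$ into $v\in H^1_\mu(\R)$, shows that $u$ vanishes off $\Gamma=\Gamma_1\cup\Gamma_2$ and hence (by positivity and connectedness) that $\Gamma$ covers $\G$, deduces $v=\phi_\mu$, and then injectivity and strict monotonicity of $\phi_\mu$ force each $\gamma_i$ to be simple and force any intersection $\gamma_1(z_1)=\gamma_2(z_2)$ to occur at $z_1=z_2$: this exhibits $\G$, on the spot, as $\R$ with finitely many pairs $\pm a_j$ identified, i.e.\ Example~\ref{example1}. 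Having discarded this scaffolding, you must reconstruct $\G$ bare-handed from level-set rigidity: uniqueness of the peak, exclusion of secondary local maxima, the claim that every branching vertex has exactly two ascending and two descending edges (ruling out, say, two ascending and one descending requires playing levels just above and just below the vertex value against $N(t)=2$, and also excluding a second vertex at exactly the same level carrying an ascending edge), synchronization and eventual coverage of $\G$ by the two descent paths, and finally the isometry with the glued-soliton graphs. You list these steps as obligations (``one must rigorously control\dots'') but prove none of them; each is doable, yet together they constitute the actual content of the ``unless $\G$ is one of the graphs of Example~\ref{example1}'' clause. As written, the proposal establishes nonexistence-or-rigidity data but not the classification; the missing idea that makes the paper's proof close is Lemma~\ref{lemmaHprime}.
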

Thus, with the only exception of the graphs of Example~\ref{example1},
assumption (H) rules out the existence of minimizers. Among metric graphs with
at least two half-lines, the simplest one that violates (H) is the graph in
Figure~\ref{Figbridge}.b, made up of two half-lines and one bounded edge (of arbitrary length)
joined at their initial point.

For this graph, we have the following result.
\begin{theorem} \label{teoremdue}
Let $\G$ consist of
two half-lines and one bounded edge (of arbitrary length $\ell>0$)
joined at their initial point. Then
\begin{equation}
\label{infminore}
\inf_{u\in H^1_\mu(\G)} E(u,\G)<
\min_{\phi\in H^1_\mu(\R)} E(\phi,\R)=E(\phi_\mu,\R)
\end{equation}
and the infimum is achieved.
\end{theorem}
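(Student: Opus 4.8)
The statement bundles two assertions --- the strict energy gap \eqref{infminore} and the attainment of the infimum --- and the plan is to deduce both from the hybrid rearrangement of Lemma~\ref{lemmaH}, proving \eqref{infminore} first since, as the introduction stresses, it is exactly what restores compactness. Since Theorem~\ref{teoremzero} already gives $\inf_{u\in H^1_\mu(\G)}E(u,\G)\le E(\phi_\mu,\R)$, for \eqref{infminore} I only need one competitor strictly below $E(\phi_\mu,\R)$. The mechanism is that the free endpoint at the tip of the pendant makes it profitable to concentrate the high values of $u$ there: by the scaling rule \eqref{scalingrule} one has $E(\phi_m,\R)=E(\phi_1,\R)\,m^{(p+2)/(6-p)}$ with $E(\phi_1,\R)<0$ and exponent $>1$, so $m\mapsto E(\phi_m,\R)$ is strictly concave and taller, narrower profiles are strictly cheaper. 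Concretely I would compare the symmetric rearrangement onto $\R$, which only reaches the soliton level $E(\phi_\mu,\R)$, with the \emph{hybrid} rearrangement of Lemma~\ref{lemmaH}, which instead sends the top part of $u$ \emph{increasingly} onto the single pendant edge while leaving the lower part symmetric on the two half-lines; the point is that the hybrid arrangement is strictly more efficient, and this strict gain is the content I would extract from Lemma~\ref{lemmaH}. I expect the delicate case to be small $\ell$, where the peak cannot simply be dumped near the tip and the precise balance furnished by the hybrid rearrangement is essential.

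For attainment I would run the direct method. Let $(u_n)\subset H^1_\mu(\G)$ be a minimizing sequence for \eqref{minE2}; replacing $u_n$ by $|u_n|$ and then by its hybrid rearrangement (Lemma~\ref{lemmaH}) I may assume $u_n\ge 0$, that $E(u_n,\G)$ does not increase and the mass \eqref{mass} is preserved, and that each $u_n$ is in \emph{canonical form}: increasing along the pendant with $\sup u_n$ attained at the tip, and symmetric and decreasing along the two half-lines. The lower bound \eqref{eq:14tri} gives $\inf_\G E>-\infty$, and combining this with the mass constraint and the Gagliardo--Nirenberg inequality on $\G$ bounds $\|u_n'\|_{L^2(\G)}$; hence $(u_n)$ is bounded in $H^1(\G)$ and, along a subsequence, $u_n\rightharpoonup u$ weakly in $H^1(\G)$, uniformly on the (compact) pendant and locally uniformly on the half-lines.

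The heart of the matter is to show that no mass escapes to infinity, i.e. that $\|u\|_{L^2(\G)}^2=\mu$. Here the canonical form pays off: the pendant is compact, and on each half-line the monotonicity together with $\int u_n^2\le\mu$ forces $u_n(x)\le\sqrt{\mu/x}$, so any mass lost in the limit can only sit in low, far-out tails whose height tends to $0$. Consequently the escaping part has vanishing $L^p$ norm and non-negative kinetic energy, and the Brezis--Lieb splitting yields $\mathcal E(\mu)=\lim_n E(u_n,\G)\ge E(u,\G)\ge\mathcal E(\mu-\theta)$, where $\theta=\mu-\|u\|_{L^2(\G)}^2$ is the deficit and $\mathcal E(m)$ denotes the infimum in \eqref{minE2} for mass $m$. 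On the other hand, comparing $\mathcal E(\mu)$ with $\mathcal E(\mu-\theta)$ through the dilations $v\mapsto\lambda v$ (which keep $v$ on $\G$, scale the mass by $\lambda^2$ and satisfy $E(\lambda v,\G)\le\lambda^2 E(v,\G)$ for $\lambda\ge1$) and using $\mathcal E(m)\le E(\phi_m,\R)<0$ from \eqref{eq:14bi} and \eqref{ensol}, one gets the strict monotonicity $\mathcal E(\mu)<\mathcal E(\mu-\theta)$ whenever $\theta>0$ --- precisely the quantitative form of ``concentrating mass is convenient'' behind \eqref{infminore}. The two inequalities are incompatible unless $\theta=0$, so no mass is lost.

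With $\theta=0$ the convergence upgrades to strong convergence in $L^2(\G)$ and $L^p(\G)$, so $u\in H^1_\mu(\G)$ and, by weak lower semicontinuity of $\|\cdot'\|_{L^2(\G)}$, $E(u,\G)\le\liminf_n E(u_n,\G)=\mathcal E(\mu)$; thus $u$ is a minimizer, and its energy is $<E(\phi_\mu,\R)$ by the already established \eqref{infminore}. The main obstacle throughout is Lemma~\ref{lemmaH}: proving that the hybrid rearrangement does not increase the energy on $\G$ for \emph{every} $\ell>0$, since this single fact both produces the canonical form used for compactness and, through its strict version, delivers the gap \eqref{infminore}. The secondary difficulty is the mass-splitting step, where the canonical form must be used to guarantee that escaping mass carries no useful (negative) energy.
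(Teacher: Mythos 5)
Your attainment argument is sound, but your proof of the strict inequality \eqref{infminore} has a genuine gap: you never exhibit the function to which Lemma~\ref{lemmaH} is applied. The lemma compares $E(\widetilde u,\G)$ with $E(u,\G)$ for one and the same $u\in H^1_\mu(\G)$; it does \emph{not} assert that the hybrid rearrangement "is strictly more efficient" than the symmetric rearrangement, nor does it compare anything with the soliton level. So to extract \eqref{infminore} from part (iii) of the lemma you must produce a concrete competitor $u=(\phi,\psi)\in H^1_\mu(\G)$ satisfying two things at once: $E(u,\G)\le E(\phi_\mu,\R)$, and failure of the equality conditions in (iii) (i.e. $\psi$ not increasing on $[0,\ell]$, or $\min\psi\neq\max\phi$). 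The paper's choice is the soliton \emph{folded onto the graph} with its peak at the midpoint of the pendant:
\[
\phi(x)=\phi_\mu\bigl(|x|+\ell/2\bigr)\ \ (x\in\R),\qquad
\psi(x)=\phi_\mu\bigl(x-\ell/2\bigr)\ \ (x\in[0,\ell]),
\]
which lies in $H^1_\mu(\G)$, has energy \emph{exactly} $E(\phi_\mu,\R)$, and has a pendant part that increases and then decreases, hence is not monotone; part (iii) then forces $E(\widetilde u,\G)<E(u,\G)=E(\phi_\mu,\R)$, and \eqref{infminore} follows from Theorem~\ref{teoremzero}. Without this (or an equivalent) input function, your first paragraph is only a heuristic. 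Note also that your concern about small $\ell$ is a red herring: the folded-soliton construction works verbatim for every $\ell>0$, with no case distinction.

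The compactness part of your proposal is correct and is essentially the paper's route: same canonical form from Lemma~\ref{lemmaH} (though you should justify, as the paper does by a density argument, that a minimizing sequence can be modified to satisfy the lemma's hypotheses $u_n>0$ and $\mathop{\rm meas}(\{u_n=t\})=0$), and the same use of radial monotonicity on the line to upgrade weak convergence to $L^\infty$, hence $L^p$, convergence. The only difference is at the end: where you invoke Brezis--Lieb splitting and the strict monotonicity $\mathcal E(\mu)<\mathcal E(\mu-\theta)$ of the ground-state energy in the mass (proved by scaling competitors and using $\mathcal E(m)\le E(\phi_m,\R)<0$ from \eqref{eq:14bi} and \eqref{ensol}), the paper scales the weak limit itself: if $\|u\|_{L^2(\G)}^2<\mu$ then $\sigma u\in H^1_\mu(\G)$ for some $\sigma>1$ and $E(\sigma u,\G)<\sigma^2E(u,\G)<E(u,\G)\le\inf$, a contradiction. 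The two mechanisms are equivalent (both rest on $\sigma^p>\sigma^2$ and negativity of the infimum); yours is marginally more machinery for the same conclusion.
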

As mentioned in the introduction, any minimizer exploits
the peculiar topology of this graph, and tends to concentrate on
the pendant. This is described in the following theorem.
\begin{theorem} \label{teoremdueB}
Let $\G$ be as in Theorem~\ref{teoremdue}, and let
$u\in H^1_\mu(\G)$ be any minimizer that achieves the infimum in \eqref{infminore}.
Then, up to replacing $u$ with $-u$, we have $u>0$ and
\begin{itemize}
\item[(i)] $u$ is strictly monotone along the pendant, with a
maximum at the tip.
\item[(ii)] If $u_1,u_2$ denote the restrictions of $u$ to the two half-lines,
with coordinates $x\geq 0$ starting both ways at the triple junction, then
\[
u_1(x)=u_2(x)=\phi_{\mu^*}(x+y)\quad\forall x\geq 0,
\]
for suitable $y>0$ and $\mu^*>\mu$ that depend on the mass $\mu$ and the length of
the pendant $\ell$. In particular, the restriction of $u$
to the straight line
is symmetric and radially decreasing, with a corner point at the origin.
\item[(iii)] For fixed $\mu$, the infimum in \eqref{infminore} is a strictly decreasing function
of $\ell$.
\end{itemize}
\end{theorem}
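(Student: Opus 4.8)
The plan is to prove each of the three qualitative properties of a minimizer $u$ for the graph $\G$ of Theorem~\ref{teoremdue}. Since $E(|u|,\G)\le E(u,\G)$ and any minimizer must itself be a minimizer, we may assume $u\ge 0$; strict positivity and the structure will follow from the Euler--Lagrange equations. The graph $\G$ consists of a bounded pendant edge $I_0=[0,\ell]$ and two half-lines $I_1,I_2=[0,+\infty)$, all meeting at a single triple junction $\vv$. On each edge the minimizer satisfies the stationary NLS equation $-u_e''=\lambda u_e+|u_e|^{p-2}u_e$ for a common Lagrange multiplier $\lambda$, together with continuity at $\vv$ and a Kirchhoff condition on the sum of the outgoing derivatives; at the tip $x=\ell$ of the pendant the natural boundary condition is $u_0'(\ell)=0$.

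For (ii) my approach is to observe that each half-line branch $u_i$ solves the same ODE on $[0,+\infty)$ with $u_i(0)=u(\vv)$ and $u_i(+\infty)=0$ by \eqref{zeroallinfinito}; the decaying solutions of this ODE form a one-parameter family of translated soliton tails, so $u_1(x)=u_2(x)=\phi_{\mu^*}(x+y)$ for some shift $y$ and some mass $\mu^*$. (I would argue $y>0$, i.e. we are on a strictly decreasing soliton tail, because if the common boundary value at $\vv$ were the soliton maximum then the Kirchhoff condition would force $u_0'(0)$ to balance two \emph{positive} derivatives, which is incompatible with the monotonicity I establish in (i).) The equality $u_1\equiv u_2$ should come from a rearrangement/symmetrization argument on the two half-lines: if the two branches differed, replacing both by the symmetric decreasing rearrangement of the pair does not increase the energy while preserving the mass and the value at $\vv$, giving a competitor that must again be a minimizer and forcing equality. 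That $\mu^*>\mu$ reflects that the two half-line tails together carry the mass of a full soliton of mass larger than the actual $L^2$-mass of $u$ on the two half-lines alone.

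For (i), monotonicity along the pendant with maximum at the tip, the cleanest route is via the hybrid rearrangement of Lemma~\ref{lemmaH}: a minimizer must coincide (up to the structure already found) with its own hybrid rearrangement, in which high values are arranged increasingly on the pendant toward the tip. Alternatively, one can argue directly from the ODE: on $[0,\ell]$ the solution of $-u_0''=\lambda u_0+u_0^{p-1}$ with $u_0'(\ell)=0$ and $u_0>0$ is, by phase-plane analysis, monotone on the whole interval, and the Kirchhoff balance together with the decreasing half-line tails forces $u_0'(0)<0$ when measured toward $\vv$, i.e. $u_0$ increases from the junction to the tip. The strictness of the monotonicity is then just the statement that $u_0'$ cannot vanish in the interior, which follows from uniqueness for the ODE since a vanishing derivative plus the equation would pin down a constant or a symmetric profile inconsistent with the boundary data.

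For (iii), that the infimum is strictly decreasing in $\ell$, my plan is a comparison argument: given lengths $\ell_1<\ell_2$, take a minimizer $u$ on the graph $\G_{\ell_1}$ and construct an admissible competitor on $\G_{\ell_2}$ of strictly smaller energy, for instance by extending the pendant profile past its tip using a small increasing piece that exploits the free boundary condition $u_0'(\ell_1)=0$, redistributing a little mass there; since concentrating mass on a longer pendant is energetically favorable (this is the whole mechanism behind \eqref{infminore}), the energy strictly drops. I expect the main obstacle to be the rigorous justification of the equality $u_1\equiv u_2$ and the sign $y>0$ in part (ii): establishing these cleanly requires combining the ODE classification of decaying solutions with a symmetrization step and a careful use of the Kirchhoff condition, and it is here that one must rule out degenerate configurations (such as the common value at $\vv$ equalling the soliton maximum) that the topology would otherwise permit.
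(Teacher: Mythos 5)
Your route to (ii) has a genuine gap, and the two devices you propose to close it both fail. The ODE classification on the half-lines leaves \emph{three} admissible configurations for the restriction $\phi$ of $u$ to the line---precisely cases (i)--(iii) of Theorem~\ref{thretta}, which the paper obtains by observing that $\phi$ must solve the double-constrained problem \eqref{pretta}: an off-center soliton $\phi_m(x\pm y)$, the centered soliton $\phi_m$, and the matched tails $\phi_M(|x|+y)$. Your symmetrization step cannot eliminate the off-center case: the symmetric rearrangement of the pair $(u_1,u_2)$ has its \emph{maximum} at the origin, so it does not preserve the value at the junction (continuity with $\psi$ breaks) whenever $\max\phi>\phi(0)$; translating it to restore continuity just returns the off-center soliton with the same energy, so no strict decrease and no contradiction results. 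Your Kirchhoff argument is also incorrect as stated: if the junction value were the soliton maximum, the two half-line derivatives there are \emph{zero}, not positive, and in the off-center case they are opposite and cancel; in either situation \eqref{kir} yields only $\psi'(0)=0$, which is compatible with an increasing $\psi$. Worse, with $\psi'(0)=\psi'(\ell)=0$ the pendant may a priori carry a piece of a \emph{periodic} orbit of \eqref{eulero}, so phase-plane analysis alone proves neither the monotonicity claimed in your alternative proof of (i) nor the exclusion of the off-center case. The paper's mechanism is variational: after checking via \eqref{eulero} (plus an unfolding argument that uses the strict inequality \eqref{infminore} to exclude a constant $\psi$) that $\mathop{\rm meas}(\{u=t\})=0$, so that Lemma~\ref{lemmaH} applies, minimality forces \emph{equality} in item (iii) of that lemma, whence $\psi$ is increasing and $\min\psi=\max\phi$. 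In particular $\phi(0)=\max\phi$, which kills the off-center case; the centered case is then excluded by Kirchhoff at the junction, uniqueness for \eqref{eulero}, and $\psi'(\ell)=0$ at the tip---this is the one step of your sketch that survives, in corrected form---and existence, uniqueness and positivity of $y$ come from Theorem~\ref{unic}. (Note also that your heuristic for $\mu^*>\mu$ only gives $\mu^*$ larger than the mass carried by the two half-lines, which is smaller than $\mu$.)

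For (iii), your construction is not yet an argument. The natural implementation---extend $\psi$ past the tip (say by a constant, using $\psi'(\ell')=0$) and rescale $u\mapsto\sigma u$, $\sigma<1$, to restore the mass---produces two first-order terms of opposite sign: the potential-energy gain of the extension competes against the energy \emph{increase} caused by rescaling (which is proportional to $\|u\|_{L^p}^p-\|u'\|_{L^2}^2>0$ for any minimizer, since $E(u,\G)<0$), and the sign of their sum is not obvious without a quantitative estimate you do not supply. The paper instead re-runs the hybrid rearrangement: take the minimizer $u$ on the graph $\G'$ with the shorter pendant $\ell'$, choose the threshold $\tau$ so that $\mathop{\rm meas}(\{u>\tau\})=\ell$ with $\ell$ the \emph{longer} length, as in \eqref{chooset}, and rearrange onto the longer graph $\G$. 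Strictness is then automatic: since $\{u>\tau\}$ has measure $\ell>\ell'$, this set spills out of the pendant of $\G'$ onto the half-lines, so $(u-\tau)^+$ attains all sufficiently small positive values at least twice and the inequality \eqref{dis2} becomes strict, giving $E(u,\G')>E(\widetilde u,\G)$ directly, with no mass bookkeeping at all.
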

From (i) and (ii) it follows that the minimum of $u$ along the pendant coincides with
its maximum on the straight line (in other words, $u$ tends to concentrate on the pendant). Observe that,
on each half-line, $u$ coincides with a suitable
\emph{portion} of the soliton $\phi_{\mu^*}$.

\section{Some preliminary results}\label{sec:prelim}

The \emph{decreasing rearrangement} $u^*$ of a function $u\in H^1(\G)$,
where $\G$ is a metric graph, was first used in \cite{Friedlander} where
it is proved that, as in the classical case where $\G$ is an interval (see \cite{Kawohl}),
this kind of rearrangement does not increase the Dirichlet integral (see also \cite{acfn12b}).
Besides the increasing
rearrangement $u^*$, we shall also need the \emph{symmetric rearrangement}
$\widehat{u}$, whose basic properties we now recall.

Given $u\in H^1(\G)$, assume for simplicity that
\begin{equation}
\label{eq:90}
m:=\inf_{\G} u\geq 0,\qquad M:=\sup_{\G} u>0
\end{equation}
and, as in \cite{Friedlander}, let $\rho(t)$ denote the distribution
function of $u$:
\[
\rho(t)=\sum_{e\in E}\mis\bigl(\{x_e\in I_e\,:\,\,
u_e(x_e)>t\}\bigr),\quad t\geq 0,
\]
where the $u_e$'s are
the branches of $u$ as  in \eqref{eq:18}. Set
\begin{equation}
\label{defomega}
\omega:=\sum_{e\in E} \mis(I_e),\quad
I^*:=[0,\omega),\quad
\widehat{I}:=(-\omega/2,\omega/2)
\end{equation}
where $\omega\in [0,\infty]$ is the total length of $\G$. As usual,
one can define
the following rearrangements of $u$:
\begin{enumerate}
\item [(i)]the \emph{decreasing} rearrangement $u^*:I^*\to \R$ as the function
\begin{equation}
\label{defu*dec}
u^*(x):=\inf\{t\geq 0\,:\,\, \rho(t)\leq x\},\quad x\in I^*;
\end{equation}
\item [(ii)] the \emph{symmetric decreasing} rearrangement $\widehat{u}:\widehat{I}\to \R$ as the function
\begin{equation*}
\widehat{u}(x):=\inf\{t\geq 0\,:\,\, \rho(t)\leq 2|x|\},\quad x\in \widehat{I}.
\end{equation*}
\end{enumerate}
Since $u$, $u^*$ and $\widehat u$ are equimeasurable, one has
\begin{equation}
\label{normelr}
\int_{I^*} |u^*(x)|^r\,dx=
\int_{\widehat{I}} |\widehat{u}(x)|^r\,dx=
\int_{\G} |u(x)|^r\,dx
\quad\forall r>0
\end{equation}
and
\begin{equation*}
\inf_{I^*}u^*=
\inf_{\widehat{I}}\widehat{u}=\inf_{\G}u=m,\quad
\sup_{I^*}u^*=
\sup_{\widehat{I}}\widehat{u}=\sup_{\G}u=M.
\end{equation*}
As in the classical case
where $\G$ is an interval (see \cite{Kawohl}),
when $\G$ is a
\emph{connected} metric graph
it turns out (see \cite{Friedlander}) that
 $u^*\in H^1(I^*)$ and
$\widehat{u}\in H^1(\widehat{I})$ respectively (connectedness of $\G$ is not essential, as long
as the image of $u$ is connected).
However,
while the passage from $u$ to $u^*$ never increases the
Dirichlet integral (\cite{Friedlander}), this is not always true
for $\widehat{u}$, a sufficient condition being that  the number of preimages
\begin{equation*}
N(t):=\#\{x\in \G\,:\,\, u(x)=t\},
\quad t\in (m,M)
\end{equation*}
is \emph{at least two} (see
Remark~2.7 in \cite{Kawohl}).
More precisely, we have
\begin{proposition}
\label{proprear}
Let $\G$ be a connected metric graph, and let $u\in H^1(\G)$ satisfy \eqref{eq:90}.
Then
\begin{equation}
\label{eq:ens}
\int_{I^*} |(u^*)'|^2\,dx\leq\int_{\G} |u'|^2\,dx,
\end{equation}
with strict inequality unless $N(t)=1$ for a.e. $t\in (m,M)$. Finally,
\begin{equation}
\label{eq:ens2}
N(t)\geq 2\quad\text{for a.e. $t\in (m,M)$}
\quad\Rightarrow\quad\int_{\widehat{I}} |(\widehat{u})'|^2\,dx\leq\int_{\G} |u'|^2\,dx,
\end{equation}
where equality implies that $N(t)=2$ for a.e. $t\in (m,M)$.
\end{proposition}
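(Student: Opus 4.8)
The plan is to use the layer-cake / coarea representation of the Dirichlet integral in terms of the distribution function $\rho(t)$, which is the standard workhorse in this kind of rearrangement argument. For an absolutely continuous $u\in H^1(\G)$ satisfying \eqref{eq:90}, the key identity is
\[
\int_{\G} |u'|^2\,dx=\int_m^M \Bigl(\sum_{x:\,u(x)=t}\frac{1}{|u'(x)|}\Bigr)^{-1}\!\!\!\cdot\,N(t)^2\;dt,
\]
or, more usefully, a pointwise comparison of the integrand at each level $t$ via the Cauchy--Schwarz inequality. First I would justify that for a.e.\ $t\in(m,M)$ the level set $\{u=t\}$ consists of exactly $N(t)$ points, at each of which $u'\neq0$, so that $\rho$ is differentiable there with $-\rho'(t)=\sum_{x:\,u(x)=t}|u'(x)|^{-1}$. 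The decreasing rearrangement $u^*$ is the generalized inverse of $\rho$, so $|(u^*)'|=1/|\rho'|$ at the corresponding level, and one computes $\int_{I^*}|(u^*)'|^2\,dx=\int_m^M |\rho'(t)|^{-1}\,dt$ after the change of variables $x=\rho(t)$.

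The heart of the matter is then a levelwise inequality. By Cauchy--Schwarz,
\[
N(t)^2=\Bigl(\sum_{x:\,u(x)=t}1\Bigr)^2\leq
\Bigl(\sum_{x:\,u(x)=t}|u'(x)|\Bigr)\Bigl(\sum_{x:\,u(x)=t}\frac{1}{|u'(x)|}\Bigr)
=\bigl(-\tilde\rho'(t)\bigr)\,\bigl(-\rho'(t)\bigr),
\]
where $-\tilde\rho'(t)=\sum |u'(x)|$ is exactly the contribution of level $t$ to $\int_\G|u'|^2$. Rearranging gives $|\rho'(t)|^{-1}\leq N(t)^{-2}\sum_{x:\,u(x)=t}|u'(x)|$. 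Integrating in $t$, and using $N(t)\geq1$, yields \eqref{eq:ens}; integrating with the stronger hypothesis $N(t)\geq2$ of \eqref{eq:ens2}, the factor $N(t)^{-2}\leq\tfrac14$ combined with the analogous computation for $\widehat u$ (whose derivative satisfies $|(\widehat u)'|=2/|\rho'|$, so that $\int_{\widehat I}|(\widehat u)'|^2\,dx=4\int_m^M|\rho'(t)|^{-1}\,dt$) produces the claimed bound. The equality cases fall out of the equality case in Cauchy--Schwarz and the factor $N(t)^{-2}$: equality in \eqref{eq:ens} forces $N(t)=1$ a.e., and equality in \eqref{eq:ens2} forces $N(t)=2$ a.e.

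The main obstacle I anticipate is not the formal computation but the measure-theoretic justification needed to make the coarea identity rigorous on a graph. Specifically, I must handle the \emph{critical levels} where $u'$ vanishes or where $u$ achieves a local extremum at a vertex, and I must argue these form a negligible set of values $t$ (an application of Sard-type reasoning for Sobolev functions, or a direct approximation by piecewise smooth functions). I would also need to verify that the branches $u_e$ glue correctly so that $\rho$ and $N$ are well defined across vertices, and that the continuity of $u$ at vertices does not create extra subtleties in counting preimages. Since the excerpt attributes the basic membership $u^*,\widehat u\in H^1$ and the decreasing-rearrangement inequality to \cite{Friedlander} and \cite{Kawohl}, I would lean on those results for the regularity of $u^*$ and $\widehat u$ and concentrate the new work on the levelwise Cauchy--Schwarz estimate and the precise tracking of the equality cases.
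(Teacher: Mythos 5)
Your proposal is correct and coincides with the proof the paper relies on: the paper gives no argument of its own for Proposition~\ref{proprear}, deferring the $u^*$ statement to \cite{Friedlander} and asserting that the $\widehat u$ statement ``can be proved in exactly the same way,'' and the argument in those sources is precisely your coarea-plus-Cauchy--Schwarz computation --- the levelwise bound $\sum_{x:u(x)=t}|u'(x)|\geq N(t)^2|\rho'(t)|^{-1}$, integrated with $N(t)^2\geq 1$ for $u^*$ and $N(t)^2\geq 4$ for $\widehat u$ (via $\int_{\widehat I}|(\widehat u)'|^2\,dx=4\int_m^M|\rho'(t)|^{-1}\,dt$), with the equality cases read off from Cauchy--Schwarz exactly as you do. One slip worth noting: your first display is not an identity but only a lower bound for $\int_\G |u'|^2\,dx$ (again by Cauchy--Schwarz); the true coarea identity is $\int_\G|u'|^2\,dx=\int_m^M\bigl(\sum_{x:u(x)=t}|u'(x)|\bigr)\,dt$, which is what your subsequent computation actually uses, so nothing downstream is affected.
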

The part concerning $u^*$ can be found in \cite{Friedlander},
while the corresponding statements for $\widehat u$ can be proved in exactly
the same way.

\begin{remark}\label{remomega}
If $\G$ is non-compact, i.e. if $\G$ contains at least one half-line,
then clearly $\omega=+\infty$ in \eqref{defomega}, so that
$I^*=\R^+$ and $\widehat{I}=\R$. Thus, in particular, $u^*\in H^1(\R^+)$
while $\widehat{u}\in H^1(\R)$.
\end{remark}

The following standard result deals with the optimality conditions satisfied
by
any solution to \eqref{minE2}.
\begin{proposition}
\label{necess} Let $\G$ be a metric graph, and $u \in H_\mu^1(\G)$
a solution to \eqref{minE2}.
Then
\begin{itemize}
\item[(i)] there exists $\lambda \in\R$ such that
\begin{equation}
\label{eulero}
u_e'' +u_e |u_e|^{p-2} = \lambda u_e\quad\text{for every edge $e$;}
\end{equation}
\item[(ii)] for every vertex $\vv$ (that is not a vertex at infinity)
\begin{equation}
\label{kir}
\sum_{e\succ \vv}
\frac{d u_e}{d x_e}(\vv)=0
\qquad\text{ (Kirchhoff conditions),}
\end{equation}
where the condition $e\succ \vv$ means that edge $e$ is incident at $\vv$;
\item[(iii)] up to replacing $u$ with $-u$, one has that $u>0$ on $\G$.
\end{itemize}
\end{proposition}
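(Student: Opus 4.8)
The plan is to read parts (i) and (ii) off a single weak Euler--Lagrange identity for the constrained problem, and then to deduce the positivity (iii) from a strong maximum principle that takes (i) and (ii) as input. So I would first produce the Lagrange multiplier. Since $\mu>0$, the constraint functional $G(u):=\|u\|_{L^2(\G)}^2$ is smooth on $H^1(\G)$ with $G'(u)=2u\neq 0$, hence $H^1_\mu(\G)=G^{-1}(\mu)$ is a $C^1$ submanifold near the minimizer $u$ and the Lagrange rule yields some $\lambda\in\R$ with
\[
\int_\G u'v'\,dx-\int_\G |u|^{p-2}u\,v\,dx=\lambda\int_\G u\,v\,dx
\qquad\forall v\in H^1(\G).
\]
(If one prefers to avoid the abstract rule, the same identity follows from the implicit function theorem: for each fixed $v$ one renormalizes the curve $t\mapsto u+tv$ so as to stay on $\{G=\mu\}$, using $\langle G'(u),u\rangle=2\mu\neq 0$, and sets the $t$-derivative of $E$ along it equal to zero.) Every assertion of the proposition is then extracted from this identity.

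For (i), I would take $v\in C_c^\infty$ supported in the interior of a single edge $e$; the identity says that $u_e$ is a weak solution of $-u_e''-|u_e|^{p-2}u_e=\lambda u_e$ on $I_e$. As $u_e\in H^1(I_e)\hookrightarrow C^0$, the right-hand side is continuous, so $u_e''$ is continuous and a routine bootstrap makes $u_e$ a classical (indeed smooth) solution, giving the equation of (i) once the sign of $\lambda$ is fixed to match the statement. For (ii), I would instead choose $v$ supported in a small neighborhood of a single vertex $\vv\notin V_\infty$ with $v(\vv)\neq 0$; integrating by parts on each incident edge and cancelling all interior integrals by means of the equation just established leaves only the boundary contributions at $\vv$, namely $\big(\sum_{e\succ\vv}\frac{du_e}{dx_e}(\vv)\big)v(\vv)=0$. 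Since $v(\vv)$ is arbitrary, the Kirchhoff condition \eqref{kir} follows, a self-loop at $\vv$ contributing both of its endpoints (hence counted twice, consistently with the definition of degree).

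For (iii), the starting point is that $|u|\in H^1(\G)$ with $\big\||u|'\big\|_{L^2}=\|u'\|_{L^2}$ and $\big\||u|\big\|_{L^r}=\|u\|_{L^r}$, so $|u|\in H^1_\mu(\G)$ and $E(|u|,\G)=E(u,\G)$; thus $w:=|u|$ is itself a minimizer and, by the previous steps, a nonnegative classical solution of $w''+w^{p-1}=\lambda w$ on each edge satisfying \eqref{kir}. I would then show that the (closed) set $\{w=0\}$ is also open. At an interior zero $x_0$ of an edge, $w\geq 0$ forces $w'(x_0)=0$, and since the nonlinearity $s\mapsto\lambda s-s^{p-1}$ is $C^1$ near $s=0$ (here the assumption $p>2$ is used), uniqueness for the Cauchy problem gives $w\equiv 0$ on that whole edge. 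At a vertex zero $\vv$, global minimality of $w$ makes the one-sided derivative of $w$ in the direction entering each incident edge nonnegative; as \eqref{kir} expresses that these signed derivatives sum to zero, each must vanish, and uniqueness again propagates $w\equiv 0$ along every incident edge. Hence $\{w=0\}$ is open and closed; since $\G$ is connected and $\|w\|_{L^2}^2=\mu>0$ excludes $w\equiv 0$, we conclude $w>0$ everywhere, i.e. $u$ never vanishes. Continuity of $u$ on the connected graph then forces a constant sign, and replacing $u$ by $-u$ if necessary yields $u>0$.

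The multiplier and the edgewise regularity are routine; the step demanding the most care is (iii), and specifically the exclusion of zeros \emph{at a vertex}, where the sign information coming from \eqref{kir} must be combined with ODE uniqueness to open up the zero set. This is the only place where the branched structure of $\G$ genuinely enters, and where the hypothesis $p>2$ is essential for the Lipschitz bound behind uniqueness.
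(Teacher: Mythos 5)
Your proposal is correct and follows essentially the same route as the paper: the weak Euler--Lagrange identity with a Lagrange multiplier, localized test functions in the interior of an edge for (i), test functions supported near a single vertex for (ii), and for (iii) the observation that $|u|$ is also a minimizer, the sign of the one-sided derivatives at a vertex zero combined with \eqref{kir}, and ODE uniqueness to propagate vanishing. Your ``open-and-closed zero set'' phrasing is just a repackaging of the paper's iteration through neighboring vertices, and your explicit remark that $p>2$ guarantees the local Lipschitz property needed for uniqueness is a welcome detail the paper leaves implicit.
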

The Kirchhoff condition \eqref{kir} is well known (see \cite{Friedlander,Kuchment}) and
is a natural form of continuity of $u'$ at the vertices of $\G$.
Observe that, by \eqref{eulero}, $u_e\in H^2(I_e)$ for every edge $e$, so that $u'_e$ is well defined
at both endpoints of $I_e$: in \eqref{kir}, of course, the symbol
$d u_e/d x_e(\vv)$ is a shorthand notation for $u_e'(0)$ or $-u_e'(\ell_e)$, according to
whether the coordinate $x_e$ is equal to $0$ or $\ell_e$ at $\vv$.

\begin{proof} Since both the energy $E(u,\G)$ and the $L^2$ constraint in
\eqref{h1m} are differentiable in $H^1(\G)$
and $u$ is a constrained critical point, computing G\^{a}teaux derivatives
one has
\begin{equation}
\label{weak}
\int_\G \left(u'\eta' - u |u|^{p-2} \eta\right)\dx
+ \lambda \int_\G u \eta\dx =0\qquad \forall \eta \in H^1(\G)
\end{equation}
where $\lambda$ is a Lagrange multiplier.
Fixing an edge $e$, choosing $\eta\in C^\infty_0(I_e)$ and integrating by parts,
one obtains \eqref{eulero}.

Now fix a vertex $\vv$ (not at infinity) and choose $\eta\in H^1(\G)$,
null at every vertex of $\G$ except at $\vv$: integrating by parts in \eqref{weak}
and using (i), only the boundary terms at $\vv$ are left, and one finds
\[
-\sum_{e\succ \vv}
\frac{d u_e}{d x_e}(\vv)\eta(\vv)=0,
\]
and \eqref{kir} follows since $\eta(\vv)$ is arbitrary.

To prove (iii), observe that if $u$ is a minimizer so is $|u|$, hence we may assume
that $u\geq 0$.
First assume that $u$ vanishes at a vertex
$\vv$.
Since $u\ge 0$ on $\G$,
no term
involved in \eqref{kir}
can be negative: since their sum is zero,
every derivative in \eqref{kir} is in fact zero.
Then, by uniqueness for the ODE \eqref{eulero}, we see that
$u_e\equiv 0$ along every edge $e$ such that $e\succ \vv$: since
$\G$ is connected, this argument can be iterated through neighboring vertices
and one obtains that $u\equiv 0$ on $\G$, a contradiction since $u\in H^1_\mu(\G)$.
If, on the other hand, $u_e(x)=0$ at some point $x$ interior to some edge $e$, from $u\geq 0$
we see that also $u'_e(x)=0$ and, as before, from \eqref{eulero} we deduce that
$u_e\equiv 0$ along $e$. Thus, in particular, $u(\vv)=0$ at a vertex $\vv \prec e$,
and one can argue as above.
\end{proof}

Another useful result, valid for any metric graph $\G$, is the
following Gagliardo-Nirenberg inequality:
\begin{equation*}
\|u\|_{L^p(\G)}^p \le C\|u\|_{L^2(\G)}^{\frac p 2 +1}
\|u\|_{H^1(\G)}^{\frac p 2-1}\qquad \forall u \in H^1(\G),
\end{equation*}
where $C=C(\G,p)$. This is well known when $\G$ is an
interval (bounded or not, see \cite{brezis}): for the general case,
it suffices to write the inequality for each edge of $\G$, and take the sum.

In particular, when $\|u\|_{L^2(\G)}^2=\mu$ is fixed, we obtain
\begin{equation*}
\|u\|_{L^p(\G)}^p \le C+
C\|u'\|_{L^2(\G)}^{\frac p 2-1}\qquad \forall u \in H^1_\mu(\G),
\end{equation*}
where now $C=C(\G,p,\mu)$. Since our $p$ satisfies \eqref{mup}, this shows
that the negative term in \eqref{NLSe} grows \emph{sublinearly}, at infinity,
with respect to the positive one. As  a consequence, Young's inequality gives
$\|u'\|_{L^2(\G)}^2\leq C+CE(u,\G)$ when $u \in H^1_\mu(\G)$, and hence also
\begin{equation}
\label{coerc}
\|u\|_{H^1(\G)}^2 \leq C+CE(u,\G)
\qquad \forall u \in H^1_\mu(\G),\quad
C=C(\G,p,\mu).
\end{equation}

\section{Some auxiliary results}\label{sec:aux}
In this section we discuss two auxiliary
\emph{double-constrained} problems,
on $\R^+$ and on $\R$ respectively, that
will be useful in Section~\ref{sec:baffo}
and may be of some interest in themselves.

We begin with the double-constrained problem on the half-line
\begin{equation}
\label{minhalf}
\min E(\phi,\R^+),\quad
\phi \in H^1(\R^+),
\quad
\int_0^\infty |\phi|^2\,dx=\frac m 2,\quad
\phi(0)=a
\end{equation}
for fixed $m,a>0$. This corresponds to \eqref{minE} when $\G=\R^+$ and $\mu=m/2$,
with the
\emph{additional} Dirichlet condition
\begin{equation*}
\phi(0) = a.
\end{equation*}

\begin{theorem}
\label{unic}
For every $a,m>0$  there exist unique
$M>0$ and $y\in\R$ such that the soliton $\phi_M$ satisfies the two conditions
\begin{equation}
\label{cond}
\phi_{M}(y) =a\qquad\text{and}\qquad \int_0^{+\infty} \phi_{M}(y+x)^2\dx = \frac m 2.
\end{equation}
Moreover, the function $x\mapsto\phi_M(y+x)$ is the unique solution to \eqref{minhalf}.

Finally, there holds
\begin{equation}
\label{ypos}
a>\phi_m(0)\iff y>0.
\end{equation}
\end{theorem}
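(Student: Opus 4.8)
The plan is to exploit the soliton scaling rule \eqref{scalingrule} to reduce the two conditions in \eqref{cond} to a single scalar equation, and then to analyze \eqref{minhalf} as a genuine constrained minimization, establishing existence, uniqueness, and the correct form of the minimizer.

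\medskip

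\noindent\textbf{Existence and form of the minimizer for \eqref{minhalf}.}
First I would treat \eqref{minhalf} directly. The functional $E(\phi,\R^+)$ is coercive on the constraint set by \eqref{coerc}, and minimizing sequences are bounded in $H^1(\R^+)$; the only issue is compactness, since $\R^+$ is unbounded. Because the Dirichlet datum $\phi(0)=a>0$ pins mass near the origin, I expect that a minimizing sequence cannot lose all its mass to infinity, so by a concentration-compactness or direct weak-limit argument a minimizer $\phi$ exists. Any minimizer satisfies the Euler--Lagrange equation $\phi''+\phi|\phi|^{p-2}=\lambda\phi$ on $(0,\infty)$ (this is \eqref{eulero} with the free endpoint at infinity), and the natural boundary condition at $0$ is replaced by the Dirichlet constraint $\phi(0)=a$; there is \emph{no} Kirchhoff/Neumann condition at $0$ because $\phi(0)$ is fixed rather than free. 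Arguing as in Proposition~\ref{necess}(iii), one may take $\phi>0$. Now the key point: a positive solution of $\phi''+\phi^{p-1}=\lambda\phi$ on a half-line that decays at $+\infty$ (forced by $\phi\in H^1$, cf.\ \eqref{zeroallinfinito}) must, by the phase-plane analysis / uniqueness for this autonomous ODE, coincide with a restriction of a full-line soliton: every such decaying positive solution is of the form $x\mapsto\phi_M(y+x)$ for some $M>0$ and some $y\in\R$, since $\phi_M$ is the unique even positive solution on $\R$ and its translates exhaust the positive decaying solutions of the ODE for the appropriate $\lambda=\lambda(M)$. This identifies the minimizer as a soliton tail and simultaneously reduces the problem to choosing $(M,y)$ so that $\phi_M(y)=a$ and $\int_0^\infty\phi_M(y+x)^2\,dx=m/2$, i.e.\ exactly \eqref{cond}.

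\medskip

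\noindent\textbf{Existence and uniqueness of $(M,y)$ solving \eqref{cond}.}
It remains to show \eqref{cond} has a unique solution. Using the scaling \eqref{scalingrule}, write $\phi_M(y)=M^\alpha\phi_1(M^\beta y)$ and substitute $s=M^\beta x$ in the integral to get $\int_0^\infty\phi_M(y+x)^2\,dx=M^{2\alpha-\beta}\int_{M^\beta y}^\infty\phi_1(s)^2\,ds$. Thus, setting $z:=M^\beta y$, the two equations become
\begin{equation}
\label{reduced}
M^\alpha\phi_1(z)=a,\qquad M^{2\alpha-\beta}\,G(z)=\frac m2,\qquad
G(z):=\int_z^\infty\phi_1(s)^2\,ds.
\end{equation}
From the first equation $M=(a/\phi_1(z))^{1/\alpha}$, and substituting into the second yields a single scalar equation $F(z)=m/2$ where $F(z)=\big(a/\phi_1(z)\big)^{(2\alpha-\beta)/\alpha}G(z)$. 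Since $\phi_1$ is a smooth, positive, strictly decreasing-in-$|z|$ bell curve with $\phi_1\to0$ at $\pm\infty$ and $G$ is smooth with $G\to\|\phi_1\|_2^2$ at $-\infty$ and $G\to0$ at $+\infty$, I would show $F$ is continuous and \emph{strictly monotone} on its natural domain (the monotonicity is the crux), with limits spanning $(0,\infty)$; hence $F(z)=m/2$ has a unique root $z$, determining $M$ and then $y=z/M^\beta$ uniquely. I expect the strict monotonicity of $F$ to be the main obstacle: it requires checking the sign of $F'$, combining the decay of $\phi_1$, the derivative $G'(z)=-\phi_1(z)^2$, and the exponent $(2\alpha-\beta)/\alpha$, and may be cleaner to argue via a variational/uniqueness characterization rather than brute-force differentiation. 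Alternatively, uniqueness of $(M,y)$ follows \emph{a posteriori} from uniqueness of the minimizer of \eqref{minhalf} together with the identification above, which sidesteps the direct monotonicity computation.

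\medskip

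\noindent\textbf{The sign relation \eqref{ypos}.}
Finally, for \eqref{ypos} I would compare with the unconstrained problem \eqref{minE} on $\R^+$ with mass $m/2$, whose minimizer is ``half a soliton'' $\phi_m$ restricted to $\R^+$ (as noted after \eqref{minE}): this corresponds to $y=0$ in \eqref{cond}, with boundary value $\phi_m(0)$. The quantity $\phi_M(y)=a$ is the height at which the soliton tail is cut. If $y>0$, the tail starts to the right of the soliton's peak, so one is on the strictly decreasing part further out, and monotonicity/scaling forces the boundary value to exceed $\phi_m(0)$; if $y<0$ the tail includes part of the rising side and the value is smaller; $y=0$ gives exactly $a=\phi_m(0)$. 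Concretely, I would fix the mass constraint $\int_0^\infty\phi_M(y+x)^2\,dx=m/2$ and show that along the one-parameter family of $(M,y)$ satisfying this constraint, the boundary value $a=\phi_M(y)$ is a strictly increasing function of $y$ crossing the value $\phi_m(0)$ exactly at $y=0$; this yields the equivalence \eqref{ypos}. The sign analysis here is elementary once the scaling relations \eqref{reduced} and the monotonicity of $\phi_1$ are in hand, so I do not expect it to present a serious difficulty beyond the monotonicity already used above.
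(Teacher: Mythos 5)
Your outline inverts the paper's order of attack (variational-first, then algebraic) rather than the paper's (algebraic-first, then a gluing argument), which would be fine \emph{if} the two steps you defer were actually carried out --- but they are precisely the hard core of the theorem, and both are left as gaps. The main one is uniqueness of $(M,y)$: you correctly identify the strict monotonicity of your $F$ (which equals $a^{1/\alpha}g(z)$ in the paper's notation, with $g$ as in \eqref{tosolve}, since $2\alpha-\beta=1$) as ``the crux'', but you never prove it. The paper's proof is the one nontrivial computation of the whole theorem: for $z<0$, $g$ is a product of two strictly decreasing functions, while for $z>0$ one differentiates under the integral and controls the sign using $1/(2\alpha)<1$ together with the \emph{log-concavity} of the profile $\phi_1(x)=C_p\mathop{\rm sech}(c_px)^{\alpha/\beta}$; nothing in your sketch supplies an ingredient playing this role. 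Your proposed fallback --- deducing uniqueness of $(M,y)$ ``a posteriori from uniqueness of the minimizer of \eqref{minhalf}'' --- is circular: your identification argument only shows that every \emph{minimizer} is a soliton tail satisfying \eqref{cond}; you have no independent proof that the minimizer is unique (concentration-compactness would give at best existence), nor that every solution of \eqref{cond} is a minimizer (a priori such pairs are only admissible critical points), and without both of these facts the equivalence cannot be run in the direction you need. A smaller instance of the same incompleteness: your limit claim for $F$ as $z\to+\infty$ is an indeterminate form $\infty\cdot 0$, which the paper resolves via $\phi_1(z+t)^2\le\phi_1(z)^{1/\alpha}\phi_1(z+t)^{2-1/\alpha}$ and $2-1/\alpha>0$ from \eqref{mup}.

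The second gap is existence: you assert a minimizer of \eqref{minhalf} exists ``by a concentration-compactness or direct weak-limit argument'' because the Dirichlet datum ``pins mass near the origin''. On an unbounded domain this is exactly the kind of statement that requires real work (ruling out vanishing and dichotomy), and the paper's strategy is designed so that no compactness argument is ever needed: it first solves \eqref{cond} by the intermediate value theorem between the limits \eqref{limiti} plus the monotonicity above, and then obtains minimality and uniqueness for \eqref{minhalf} by a cut-and-paste comparison --- any competitor $\varphi$ on $\R^+$ that beats (or ties, without coinciding with) $x\mapsto\phi_M(y+x)$, glued to $\phi_M(y+x)$ on $\R^-$, yields a function in $H^1_M(\R)$ with $\phi(0)=a$ whose energy is below the soliton level, or a non-soliton minimizer on the line, contradicting the known structure of minimizers of $E(\cdot,\R)$ in $H^1_M(\R)$. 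If you wish to keep your variational-first route, you must both carry out the compactness proof \emph{and} still do the monotonicity computation; adopting the paper's gluing trick removes the first burden entirely, but the log-concavity computation cannot be avoided, since \eqref{ypos} in your sketch also rests on that same unproven monotonicity.
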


\begin{proof}
Recalling  the scaling rule of solitons
\eqref{scalingrule},
let $z=\mu^\beta y$ (to be determined).
Then, changing variable $x=M^{-\beta} t$ in the integral, the conditions in \eqref{cond} become
\begin{equation}
\label{condbis}
M^\alpha \phi_1(z)=a,\quad\text{and}\quad
M \int_0^{+\infty} \phi_1(z+t)^2\dt = \frac m 2.
\end{equation}
Using the first condition, we can eliminate $M$ from the second and obtain
\begin{equation}
\label{tosolve}
\phi_1(z)^{-\frac{1}\alpha} \int_0^{+\infty} \phi_1(z+t)^2\dt = \frac{m a^{-\frac{1}\alpha}}{2}.
\end{equation}
Denoting by $g(z)$ the function on the left-hand side, we claim that
\begin{equation}
\label{limiti}
\lim_{z\to -\infty}g(z) = +\infty \qquad\hbox{and}\qquad \lim_{z\to +\infty}g(z) =0.
\end{equation}
The first limit is clear as
$\phi_1(z)\to 0$ while the integral tends to $\| \phi_1\|_{L^2(\R)}^2=1$.
For the second, since $\phi_1$ is decreasing on $\R^+$ one can estimate
\[
\phi_1(z+t)^2\leq
\phi_1(z)^{\frac 1 \alpha}
\phi_1(z+t)^{2-\frac 1 \alpha}\quad\forall z,t\geq 0
\]
in the integral and observe that $2-1/\alpha>0$ by \eqref{mup}.

Moreover, $g$ is strictly decreasing: this is clear when $z<0$,
since in this case $g(z)$ is the product of two strictly decreasing functions.
When $z>0$,
differentiation yields
\begin{align*}
g'(z) &= 2\int_0^{+\infty}
\frac{\phi_1(z+t)^2}{\phi_1(z)^{\frac 1\alpha}}
\left[\frac{\phi_1'(z+t)}{\phi_1(z+t)}-
\frac 1{2\alpha}\frac{\phi_1'(z)}{\phi_1(z)}\right]\dt\\
&<
2\int_0^{+\infty}
\frac{\phi_1(z+t)^2}{\phi_1(z)^{\frac 1\alpha}}
\left[\frac{\phi_1'(z+t)}{\phi_1(z+t)}-
\frac{\phi_1'(z)}{\phi_1(z)}\right]\dt
<0,
\end{align*}
having used $1/2\alpha<1$ and $\phi_1'(z)<0$ in the first inequality, and
the log--concavity of $\phi_1$ (see Remark~\ref{remclassic})
in the second. This and \eqref{limiti} show that,
given $a,m>0$, there exists a unique $z\in \R$ (hence a unique $y$)
satisfying
\eqref{tosolve}, while $M$ is uniquely determined by the first
condition in \eqref{condbis}.

To prove \eqref{ypos} observe that, by \eqref{scalingrule}, $a>\phi_m(0)$
is equivalent to $a>m^\alpha\phi_1(0)$, which in turn is equivalent to
$g(0)>ma^{-1/\alpha}/2$ since
\[
g(0)=
\phi_1(0)^{-\frac{1}\alpha} \int_0^{+\infty} \phi_1(t)^2\dt =
\frac{\phi_1(0)^{-\frac{1}\alpha}}2.
\]
But since $g(z)=ma^{-1/\alpha}/2$ by \eqref{tosolve} and $g$ is decreasing, the last inequality
is equivalent to $z>0$, which proves \eqref{ypos}.

For the last part of the claim,
by Remark~\ref{remclassic}
we see that the function
$x\mapsto \phi_M(x+y)$ minimizes $E(\phi,\R)$
under the \emph{two constraints} $\Vert\phi\Vert_{L^2(\R)}^2=M$ and
 $\phi(0)=a$. If a competitor $\varphi(x)$
better than $x\mapsto \phi_M(x+y)$ could be found for \eqref{minhalf}, then
the function equal to $\phi_M(x+y)$ for $x<0$ and to $\varphi(x)$ for $x\geq 0$ would
violate the mentioned optimality of $\phi_M(x+y)$. Finally, uniqueness
follows from the uniqueness of $M$ and $y$ satisfying \eqref{cond}: indeed,
any other solution $\varphi(x)$  to \eqref{minhalf} not coinciding on $\R^+$ with
any soliton,  arguing as before would give rise to a \emph{non-soliton} minimizer
of $E(\phi,\R)$ with mass constraint $\Vert\phi\Vert_{L^2(\R)}^2=M$.

\end{proof}

\noindent Now  we consider the analogue problem on the \emph{whole} real line, namely
\begin{equation}
\label{pretta}
\min E(\phi,\R),\quad
\phi \in H^1(\R),
\quad
\int_{-\infty}^\infty |\phi|^2\,dx=m,\quad
\phi(0)=a
\end{equation}
for fixed $m,a>0$. Its solutions are characterized as follows, a
special role being played by the soliton $\phi_m$ of mass $m$.

\begin{theorem}
\label{thretta} Let $a,m > 0$ be given.
\begin{enumerate}
\item[(i)] If $a<\phi_m(0)$ then problem \eqref{pretta}
  has exactly two solutions, given by $x\mapsto \phi_m(x\pm y)$ for a suitable $y>0$.
\item[(ii)]  If $a=\phi_m(0)$, then problem
  \eqref{pretta} has $\phi_m(x)$ as unique solution.
\item[(iii)]  if $a>\phi_m(0)$, then
problem \eqref{pretta} has exactly one solution, namely
$x\mapsto \phi_M(|x|+y)$ for suitable $M, y>0$.
\end{enumerate}
\end{theorem}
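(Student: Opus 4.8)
The plan is to dichotomize according to the sign of $a-\phi_m(0)$, disposing of (i) and (ii) by a soft comparison with the pure mass problem on $\R$ and attacking (iii) by reduction to the half-line result of Theorem~\ref{unic}. Throughout I may assume $\phi>0$, replacing a competitor by $|\phi|$ (which leaves $\phi(0)=a>0$ and does not raise the energy). The basic a priori bound is that, since \eqref{pretta} carries one constraint more than the mass-only problem, Remark~\ref{remclassic} gives $\mathcal I\ge E(\phi_m,\R)$, where $\mathcal I$ denotes the infimum in \eqref{pretta}.

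For (i) and (ii), when $a\le\phi_m(0)$ this bound is attained. Indeed $\phi_m$ is even, positive, and strictly decreasing on $\R^+$ with maximum $\phi_m(0)$, so the equation $\phi_m(t)=a$ has the single root $t=0$ if $a=\phi_m(0)$ and exactly two roots $\pm y$, $y>0$, if $a<\phi_m(0)$; the translates $x\mapsto\phi_m(x\pm y)$ (respectively $\phi_m$ itself) are then feasible for \eqref{pretta} and realize the bound, hence are solutions. For uniqueness I would note that any solution meets the bound, hence minimizes the mass-only problem, hence equals $\pm\phi_m(\cdot-\tau)$ for some $\tau$; positivity discards the minus sign, and $\phi_m(-\tau)=a$ together with the evenness and strict monotonicity of $\phi_m$ fixes $\tau$, giving exactly the stated solutions (two in case (i), one in case (ii)).

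The interesting regime is (iii), $a>\phi_m(0)$, where no translate of $\pm\phi_m$ attains the height $a$ at the origin, so $\mathcal I>E(\phi_m,\R)$ is strict and both constraints are genuinely active. The candidate minimizer is the symmetric profile $\Phi(x)=\phi_M(|x|+y)$, with $(M,y)$ produced by Theorem~\ref{unic} for the half-line problem \eqref{minhalf} at mass $m/2$ and height $a$; by \eqref{ypos}, $y>0$, so $\Phi$ is positive, even, strictly decreasing away from the origin, with a corner at $0$, $\Phi(0)=a$ and $\|\Phi\|_{L^2(\R)}^2=m$. Writing $e(s)$ for the double-constrained half-line infimum at mass $s$ and height $a$ (attained, by Theorem~\ref{unic}, by a soliton tail), I would split any competitor at $x=0$ and reflect its left branch to obtain $E(\phi,\R)=E(\phi_+,\R^+)+E(\phi_-,\R^+)\ge e(m_+)+e(m_-)$, where $m_+,m_-$ are the masses of the right and reflected-left branches $\phi_+,\phi_-$; conversely, gluing the two half-line minimizers, which share the value $a$ at the junction, shows $\mathcal I=\min_{0\le s\le m}(e(s)+e(m-s))$. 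Existence is then guaranteed by the coercivity \eqref{coerc} together with the strict inequality $\mathcal I>E(\phi_m,\R)$, which via strict subadditivity rules out any loss of mass along the half-lines in a minimizing sequence.

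The crux---and the step I expect to be hardest---is to show that this split is optimal exactly at $s=m/2$, equivalently that the minimizer is symmetric. I would first exclude $\sup\phi>a$: were it true, the symmetric decreasing rearrangement $\widehat\phi$ (energy non-increasing by Proposition~\ref{proprear}, since $N(t)\ge2$ for a.e.\ level of a positive function vanishing at $\pm\infty$) could be translated so as to take the value $a$ at $0$ with unchanged energy and mass, yielding a further minimizer; by Proposition~\ref{necess} and \eqref{eulero} this new minimizer solves the soliton ODE with a single $\lambda$ on each side of $0$ and decays at $\pm\infty$, and being a translate of a single-peaked symmetric profile it must be a whole translated soliton $\phi_m(\cdot-\tau)$, whose height $\phi_m(0)<a$ contradicts $\sup\phi>a$. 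Hence $\sup\phi=a$ is attained at $0$, so $\widehat\phi$ now preserves the Dirichlet value and provides a symmetric minimizer. Finally, each branch of a minimizer is $\le a$; equality in the split bound forces each branch to coincide with the half-line minimizer of Theorem~\ref{unic}, and such a minimizer remains $\le a$ only in its monotone regime ($y\ge0$), a minimizer with $y<0$ having an interior crest of height $\phi_M(0)>a$. On the monotone regime the scaling computation in the proof of Theorem~\ref{unic} shows the soliton parameter $M(s)$, hence the mass multiplier $\lambda(s)=\lambda_{M(s)}$, to be strictly monotone in $s$; since $e'(s)$ equals $\lambda(s)$ up to a fixed sign, $s\mapsto e(s)+e(m-s)$ is strictly monotone on either side of $s=m/2$ and attains its unique minimum there. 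Therefore $m_+=m_-=m/2$, each branch equals $\phi_M(\cdot+y)$, and $\phi=\Phi$; the uniqueness of $(M,y)$ in Theorem~\ref{unic} gives uniqueness of the solution to \eqref{pretta}.
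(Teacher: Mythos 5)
Your handling of (i) and (ii) coincides with the paper's (translate the soliton to match the point constraint, and observe that any solution attaining the lower bound $E(\phi_m,\R)$ must be a mass-only minimizer). For (iii), however, your argument as written has a genuine gap: the entire final analysis --- excluding $\sup\phi>a$ by rearrangement, identifying each branch with a half-line minimizer, and the value-function argument forcing $m_+=m_-=m/2$ --- presupposes that a minimizer of \eqref{pretta} \emph{exists}, and neither of your two existence mechanisms is actually carried out. The concentration--compactness route (``coercivity together with strict subadditivity rules out any loss of mass'') requires strict inequalities of the type $\mathcal{I}_m<\mathcal{I}_{m'}+E(\phi_{m-m'},\R)$ for $0<m'<m$, where $\mathcal{I}_s$ is the infimum of \eqref{pretta} at mass $s$; for this doubly constrained functional these are nontrivial and unproved. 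The decomposition route would give existence from $\mathcal{I}=\inf_{s}\bigl(e(s)+e(m-s)\bigr)$ together with attainment of each $e(s)$ (Theorem~\ref{unic}), but only once you know the infimum over $s$ is attained: writing $\min_{0\le s\le m}$ silently assumes continuity of $s\mapsto e(s)$ and its blow-up as $s\to0^+$, neither of which you establish. Two further steps are asserted rather than proved: the envelope identity $e'(s)=\pm\lambda(s)$ needs differentiability of the value function $e$ (extractable from the explicit soliton-tail formulas, but not free), and the claimed monotonicity of $s\mapsto e(s)+e(m-s)$ on either side of $m/2$ is only valid in the regime where both branches stay below $a$ (outside it, $M(s)$ and hence $\lambda(s)$ reverse monotonicity), so the argument must be explicitly confined to that regime via your $\sup\phi=a$ step.

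It is worth contrasting this with the paper's proof of (iii), which avoids every one of these issues by a surgery argument. Take the candidate $w(x)=\phi_M(|x|+y)$ from Theorem~\ref{unic} (with $y>0$ by \eqref{ypos}, so that $\|w\|_{L^2(\R)}^2=m$ by \eqref{cond}), and observe that $\phi_M$ is the \emph{unique} solution of the auxiliary problem of minimizing $E(\cdot,\R)$ over $H^1_M(\R)$ subject to $\phi(-y)=\phi(y)=a$, since among all translates of $\pm\phi_M$ only $\phi_M$ itself meets both point constraints. If some $v$ admissible for \eqref{pretta} satisfied $E(v,\R)\le E(w,\R)$, one could transplant $v$ into $\phi_M$ on $\{|x|\ge y\}$ (replacing $\phi_M(x)$ there by $v(x\mp y)$), producing a competitor for the auxiliary problem at least as good as $\phi_M$; optimality of $\phi_M$ shows $w$ is a minimizer, and its uniqueness shows $w$ is the only one. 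No existence theory, no rearrangement, and no differentiation of value functions is needed: the transplant trick does exactly the work that your scheme spends its hardest effort on.
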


\begin{proof} Cases (i) and (ii) are immediate since  $a$ is in the range of $\phi_m$:
as $\phi_m$ and its translates are the only positive minimizers of $E(\cdot,\R)$ in $H^1_m(\R)$,
the second constraint in \eqref{pretta}
can be matched for free by
a translation, with $y\geq 0$ such that $\phi_m(\pm y)=a$.
Case (ii) is when $a=\max\phi_m$, and so $y=0$.

Now consider (iii), where the value $a$ is not in the range of $\phi_m$.
Let $M,y$ be the numbers provided by Theorem~\ref{unic}, and observe that
$y>0$ according to \eqref{ypos}. Since $\phi_M(y)=a$ by
\eqref{cond}, the soliton $\phi_M$ is clearly the
unique solution of the constrained problem
\begin{equation}
\label{aux1}
\min E(\phi,\R),\quad
\phi\in H^1_M(\R), \quad
\phi(-y)=\phi(y)=a.
\end{equation}
Moreover, as $y>0$, the second condition in \eqref{cond} implies that
\[
\int_{\R\setminus (-y,y)}|\phi_M(x)|^2\,dx
=
2\int_{y}^\infty |\phi_M(x)|^2\,dx=m.
\]
Therefore, the function $w(x)=\phi_M(|x|+y)$ is an admissible competitor
for \eqref{pretta}, and obviously
\[
E(w,\R)=E\bigl(\phi_M,(-\infty,-y)\bigr)
+E\bigl(\phi_M,(y,+\infty)\bigr).
\]
The existence of a $v(x)$ admissible for \eqref{pretta} and such that
$E(v,\R)<E(w,\R)$, would allow the construction of a competitor better than $\phi_M$
in \eqref{aux1}, by redefining $\phi_M(x)$ when $|x|\geq y$, setting it
equal to $v(x-y)$ or $v(x+y)$, according to whether $x\geq y$ or $x\leq -y$.
Thus $w$ solves \eqref{pretta}. Similarly, a competitor $v\not\equiv w$
with $E(v,\R)=E(w,\R)$ would violate the uniqueness of $\phi_M$ as a solution
of \eqref{aux1}, and therefore $w$ is the unique solution of \eqref{pretta}.
\end{proof}

\section{Proof of the nonexistence results}\label{sec:proofs1}
\begin{proof}[Proof of Theorem~\ref{teoremzero}]
Consider $u_\eps\in H^1(\R)$ with compact support,
such that $\Vert u_\eps\Vert_{L^2}^2=\mu$ and
$u_\eps\to \phi_\mu$ strongly in $H^1(\R)$ as $\eps\to 0$. Since
$u_\eps\to \phi_\mu$ also in $L^p(\R)$, we see that
\[
E(u_\eps,\R)\to E(\phi_\mu,\R)\quad
\text{as $\eps\to 0$.}
\]
Now, by a translation, we may
assume that $u_\eps$ is supported in $[0,+\infty)$: identifying this
interval with one of the half-lines
of $\G$, we may consider $u_\eps$ as a function in $H^1_\mu(\G)$, by extending it
to zero on any other edge of $\G$. Then we have from the previous equation
\[
\inf_{u\in H^1_\mu(\G)} E(u,\G) \leq
\lim_{\eps\to 0} E(u_\eps,\G)=
E(\phi_\mu,\R),
\]
and \eqref{eq:14bi} follows from \eqref{ensol}. The inequality in
\eqref{eq:14tri} is immediate from \eqref{normelr} and
\eqref{eq:ens} (see Remark~\ref{remomega}),
by rearranging an arbitrary $u\in H^1_\mu(\G)$. Finally, the
equality in
\eqref{eq:14tri} is well known.
\end{proof}
To  prove of Theorems \ref{teoremunoA} and \ref{teoremunoB}
we need to investigate
how assumption (H),
which is purely graph-theoretical,
reflects on $\G$ as a \emph{metric-space}.
\begin{lemma}\label{lemmaHprime}
Assume $\G$ is connected and satisfies condition (H). Then $\G$, as a metric space,
satisfies the following condition as well:
\begin{itemize}
\item[{\rm(H$'$)}] For every point $x_0\in\G$, there exist two injective curves
$\gamma_1,\gamma_2:[0,+\infty)\to\G$ parameterized by arclength, with disjoint
images except for finitely many points, and such that $\gamma_1(0)=\gamma_2(0)=x_0$.
\end{itemize}
\end{lemma}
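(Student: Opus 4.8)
The plan is to translate the metric statement (H$'$) into a purely combinatorial fact about edge-disjoint paths in the multigraph $\G=(V,E)$, and then to read off (H$'$) from it. First I would reduce to the case where $x_0$ is a \emph{vertex} of $\G$. If $x_0$ lies in the interior of some edge $e$, I subdivide $e$ at $x_0$, introducing a new degree-two vertex; a direct check shows that condition (H) is preserved under such a subdivision (removing one of the two new half-edges reproduces exactly the two components obtained by removing $e$ in the original graph, each of which contains a vertex of $V_\infty$ by (H), while removing any other edge is unaffected). Since a point $x_0\in\G$ is never a vertex at infinity, after this reduction $x_0$ is an ordinary vertex, and the goal becomes: \emph{produce two edge-disjoint simple paths from $x_0$ to (necessarily distinct) vertices of $V_\infty$.}

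For the combinatorial core I would invoke Menger's theorem. Form the auxiliary multigraph $\G^{+}$ by adding a single vertex $\Omega$ together with one edge joining each $\vv\in V_\infty$ to $\Omega$. Two edge-disjoint $x_0$--$\Omega$ paths in $\G^{+}$ exist, by Menger's theorem, provided no single edge separates $x_0$ from $\Omega$. There are two kinds of edges to rule out. If $\epsilon=(\vv,\Omega)$ is one of the new edges, then, since $|V_\infty|\ge 2$ by \eqref{almenodue}, some other $\vv'\in V_\infty$ remains joined to $\Omega$, and connectedness of $\G$ supplies a path from $x_0$ to $\vv'$ avoiding $\Omega$; hence $\epsilon$ does not separate. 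If $\epsilon=e$ is an original edge, then its removal leaving $x_0$ disconnected from $\Omega$ would force the component of $x_0$ in $(V,E\setminus\{e\})$ to contain no vertex of $V_\infty$, contradicting (H). Thus the minimum edge-cut between $x_0$ and $\Omega$ is at least two, and Menger yields two edge-disjoint simple paths $P_1,P_2$ from $x_0$ to $\Omega$; their final edges are distinct edges $(\vv_1,\Omega),(\vv_2,\Omega)$, so $P_1,P_2$ reach two distinct vertices at infinity $\vv_1\ne\vv_2$ along distinct half-lines.

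Finally I would convert $P_1,P_2$ back into the curves required by (H$'$). Each $P_i$, read in the original metric graph, is a simple path starting at $x_0$, traversing finitely many bounded edges, and terminating on the half-line attached to $\vv_i$; parameterizing it by arclength gives an injective curve $\gamma_i:[0,+\infty)\to\G$ (the domain is all of $[0,+\infty)$ because the terminal half-line has infinite length) with $\gamma_1(0)=\gamma_2(0)=x_0$. Since $P_1$ and $P_2$ are edge-disjoint, their images can meet only at shared vertices, of which there are finitely many ($E$ and $V$ being finite); this is exactly the ``disjoint images except for finitely many points'' requirement, and (H$'$) follows.

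The main obstacle is not any single estimate but the careful passage between three viewpoints: the purely graph-theoretic hypothesis (H), its flow/cut reformulation, and the metric-space conclusion (H$'$). In particular one must verify that (H) is inherited by the subdivided graph, handle the multigraph features (parallel edges and self-loops, for which Menger's theorem still applies), and check that the auxiliary construction faithfully encodes ``escape to infinity''. An equivalent, slightly more hands-on route avoids Menger by analysing the bridge tree of $\G$: condition (H) forces every leaf of that tree to be a vertex at infinity, and two edge-disjoint escape routes are then obtained by following two distinct directions out of the $2$-edge-connected component containing $x_0$. I would nonetheless keep the Menger argument as the primary line, because it localizes the use of (H) to the single clean statement ``the $x_0$--$\Omega$ edge-cut has size $\ge 2$''.
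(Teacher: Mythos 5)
Your proof is correct, but it takes a genuinely different route from the paper's. The paper argues directly with \emph{trails}: it introduces the class $\mathcal T$ of trails whose initial and final edges are half-lines, observes that any point lying on such a trail satisfies (H$'$) (walk both ways from $x_0$ along the trail, then delete superfluous loops to get injectivity), and then shows by an extension--contradiction argument that the trails in $\mathcal T$ cover all of $\G$ --- hypothesis (H) being used exactly when an uncovered edge is a cut-edge, while non-cut-edges are absorbed by inserting a cycle into an existing trail. You instead reduce to the case where $x_0$ is a vertex (via subdivision, checking that (H) is inherited), compactify the escape routes by adding an auxiliary vertex $\Omega$ adjacent to every vertex of $V_\infty$, and invoke the edge form of Menger's theorem, with (H) entering only through the single clean statement that no one edge separates $x_0$ from $\Omega$; your two cut cases (auxiliary edge versus original edge, the latter being precisely where (H) is needed) are both verified correctly, and edge-disjointness of the resulting simple paths forces them to end on distinct half-lines, which is what guarantees infinite length of both curves. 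In the end the two arguments manufacture the same object --- a pair of edge-disjoint simple paths from $x_0$ running out along two distinct half-lines, i.e. the two halves of one of the paper's trails through $x_0$ --- but they package the work differently: the paper's proof is self-contained (no external theorem) and proves the slightly stronger global statement that trails with half-line ends cover $\G$, at the cost of some delicate trail surgery (cycle insertion, loop removal); yours is more modular, outsources the combinatorics to a standard theorem valid for multigraphs, and localizes the role of (H) as a min-cut bound, at the cost of the subdivision preliminaries. Your closing remark on the bridge-tree alternative is also sound, and is in spirit close to the paper's cut-edge analysis.
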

\begin{proof}
We recall  that a \emph{trail} in a graph (see \cite{bollobas})
is a finite sequence of consecutive edges, in which no edge is repeated (note that
a vertex, instead, may be repeated, if the trail contains cycles  or self-loops).
Let ${\mathcal T}$ be the class of all those \emph{trails} $T$ in the graph $\G$,
such that the initial edge and the final edge of $T$
are half-lines (observe that, due to \eqref{H0}, no edge of $T$ other than the initial and the
final edge can be a half-line:
thus, in a sense, every $T\in{\mathcal T}$
provides an immersion of $\R$ in $\G$ ---not an embedding, however, as $T$ may have
cycles). Recalling \eqref{almenodue}, since $\G$ is connected we see that
${\mathcal T}\not=\emptyset$.
Moreover, any point $x_0\in\G$ that is covered by at least one trail $T\in{\mathcal T}$,
 satisfies condition (H$'$). Indeed, to construct $\gamma_1$
and $\gamma_2$, it suffices to start at $x_0$ and move along $T$ both ways,
according to arclength: the two obtained curves can then be made injective,
by removing any useless loop that each of them, separately, may form. Moreover, since no edge
is repeated in
$T$, $\gamma_1$ and $\gamma_2$ may intersect only at finitely many vertices of $\G$.
Since the initial and the final edges of $T$ are half-lines, each $\gamma_i(t)$
is eventually trapped in a half-line and thus each $\gamma_i$ has infinite length,
which makes
it possible to parameterize $\gamma_i(t)$ injectively by arclength with $t\in [0,+\infty)$.
Therefore, it suffices to prove that the trails in ${\mathcal T}$ cover
$\G$.

Let $E_0\subseteq E$ denote the set of those edges that do not belong to any
trail $T\in{\mathcal T}$. Assuming $E_0\not=\emptyset$,
since $\G$ is connected there must be some edge $e\in E_0$ with one vertex
$\ww$  on some trail $T\in{\mathcal T}$ ($\ww\in V\setminus V_\infty$,
by \eqref{H0}).
 If $e$ were a cut-edge for $\G$,
then by (H) there would be a vertex $\vv_1\in V_\infty$ in the connected component
of $(V,E\setminus\{e\})$ disjoint from $T$: then it would be possible to
go from $\vv_1$ to $\ww$  along a trail that crosses $e$, and then proceed over a portion of $T$
up to another vertex $\vv_2\in V_\infty$,  thus constructing a path
$T'\in{\mathcal T}$ that contains $e$, which is impossible since $e\in E_0$.
Now, as $e$ is not a cut-edge, it necessarily belongs to a cycle $C$ (made up of
bounded edges only, due to \eqref{H0}): then, by inserting $C$ in the middle of
$T$ at vertex $\ww$, we can construct a trail $T'\in {\mathcal T}$ that contains
$e$. Since this is a contradiction, we see that $E_0=\emptyset$ and therefore
${\mathcal T}$ covers $\G$.
\end{proof}

\begin{proof}[Proof of Theorem \ref{teoremunoA}]
We first prove that for every $u\in H^1_\mu(\G)$,
\begin{equation}
\label{eq:40} E(u,\G)\geq E(\widehat{u},\R)\geq \min_{\phi\in
H^1_\mu(\R)}E(\phi,\R)
\end{equation}
where $\widehat{u}$ is the symmetric rearrangement of $u$ as
defined in \eqref{defu*dec}.

As $E(u,\G)=E(|u|,\G)$, we may assume that $u\geq 0$. In fact we
have that $M:=\max_{\G} u>0$ by \eqref{mass}, and that
$m:=\inf_{\G} u=0$ by \eqref{zeroallinfinito}, as $\G$
contains at least two half-lines                  due to \eqref{almenodue}.
From Remark~\ref{remomega}  $\widehat{u}\in H^1(\R)$ and,
in fact, $\widehat{u}\in
H^1_\mu(\R)$ by \eqref{normelr} with $r=2$, which proves the second inequality
in \eqref{eq:40}. The first inequality,
due to \eqref{normelr} written with $r=p$,
is in fact equivalent
to
the integral inequality in \eqref{eq:ens2}, which
follows as soon as we show that
\begin{equation}
\label{eq:cpi}
N(t):=\#\{x\in \G\,:\,\, u(x)=t\} \geq
2\quad\text{for a.e. $t\in (0,M)$.}
\end{equation}
This, in turn, follows from (H$'$) of Lemma~\ref{lemmaHprime}.
If $\gamma_1,\gamma_2$ are as in (H$'$),
relative to
a point  $x_0\in\G$ where $u(x_0)=M$,
we may
define the continuous function
\begin{equation}
\label{defv}
v:\R\to\R,\quad
v(z)=\begin{cases}
u(\gamma_1(z)) & \text{if $z\geq 0$,}\\
u(\gamma_2(-z)) & \text{if $z< 0$.}\\
\end{cases}
\end{equation}
Clearly $v(0)=u(x_0)=M$. Moreover, as each $\gamma_i(z)$ parameterizes a half-line of
$\G$ for $z$ large enough,
from \eqref{zeroallinfinito} we have that
$v(z)\to 0$ as $|z|\to\infty$, hence
$v$
has at least two
distinct preimages (in $\R$) for every value $t\in (0,M)$.
But
as the images of $\gamma_1,\gamma_2$ are disjoint except for finitely many points of $\G$,
\eqref{eq:cpi} is established and \eqref{eq:40} follows. Now \eqref{eq:40}, combined
with \eqref{eq:14bi}, proves Theorem~\ref{teoremunoA}.
\end{proof}

\begin{proof}[Proof of Theorem \ref{teoremunoB}] Carrying on with the previous proof,
assume that some $u\in H^1_\mu(\G)$ achieves the infimum in \eqref{infuguale}
(i.e. in \eqref{eq:40}).
Then, both inequalities in \eqref{eq:40} are equalitites. From them
(combining the former with Proposition~\ref{proprear})
we infer that
\begin{itemize}
\item[(i)] $N(t)=2$ for a.e. $t\in (0,M)$,
\item[(ii)] $\widehat{u}$ is a soliton of mass $\mu$, i.e. $\widehat{u}=\phi_\mu$.
\end{itemize}
Now, if $\Gamma_i$ denotes the image of the curve $\gamma_i$
defined above,
we claim that
\begin{itemize}
\item[(iii)]
the union $\Gamma=\Gamma_1\cup\Gamma_2$ covers $\G$.
\end{itemize}
Indeed, in the proof of \eqref{eq:cpi}, we proved a slightly stronger
statement, namely that
$\#( u^{-1}(t)\cap\Gamma)\geq 2$
for a.e. $t$ (while $N(t)$ counts preimages in the whole $\G$).
Then, from (i),
it follows that
those values $t\in (0,M)$ attained by
$u$ on $\G\setminus\Gamma$ (if any) form a set of measure zero. Since $u$ is
continuous on $\G$ and $\G\setminus\Gamma$ is open ($\Gamma$ is, in fact,
the trail mentioned in assumption (H)),
this implies that $u$ is \emph{constant} on any edge $e$ of
$\G$ not belonging to $\Gamma$
(as $\Gamma$ is a trail, we may regard it is as a subgraph of $\G$).
But $u$ and $\widehat{u}$ are
equimeasurable
and, by (ii), every level set of $\widehat{u}$ has measure zero,
hence the same is true for $u$:
therefore, for any edge $e$ of $\G$ not in $\Gamma$,
the only possibility is that
$u\equiv 0$ on $e$ (for the moment, this cannot be excluded: as
$\mathop{\rm meas}(\{\widehat{u}>0\})=+\infty$,
the quantity
$\mathop{\rm meas}(\{u=0\})$ cannot be computed by complementation).
However, since $u\equiv 0$ outside $\Gamma$, considering $\Gamma$ as
a subgraph, the restriction $u_{\vert\Gamma}$ satisfies
\[
u_{\vert\Gamma}\in H^1_\mu(\Gamma),
\quad
E(u_{\vert\Gamma},\Gamma)=E(u,\G)=E(\widehat{u},\R).
\]
Since the
$\gamma_i:[0,+\infty)\to \Gamma$ are injective and parameterized by arclength,
it follows from \eqref{defv} and the previous relations that
\[
v\in H^1_\mu(\R),\quad
E(v,\R)=E(u_{\vert\Gamma},\Gamma)=E(\widehat{u},\R).
\]
Then, by (ii), $v$ is necessarily the translate of
a soliton of mass $\mu$ but, since $v(0)=M=\widehat{u}(0)$,
$v$ is centered at the origin and so $v =\widehat{u}=\phi_\mu$. In particular, $v>0$ whence also
$u_{\vert\Gamma}>0$: then, by continuity, no edge of $\G$ where $u\equiv 0$ can be attached to
$\Gamma$, hence (iii) is proved since $\G$ is connected.

Now, as $v=\phi_\mu$, $v$ is injective if restricted to either $\R^+$ or $\R^-$, hence
\[
\forall z_1,z_2\geq 0,\quad
\gamma_i(z_1)=\gamma_i(z_2)\quad\Rightarrow\quad
v(z_1)=v(z_2)
\quad\Rightarrow\quad
z_1=z_2,
\]
which shows that each $\gamma_i$ is a \emph{simple} curve in $\G$.
Recall that, by (H$'$), the images of the $\gamma_i$'s intersect at
at their starting point
$x_0$ and, possibly, also at finitely many other points $x_1,\ldots,x_n$ of $\G$.
If $n=0$ (i.e. if $x_0$ is the only intersection), then clearly $\Gamma$
is isometric to the real line $\R$, and by (iii) we see that $\G$ is necessarily
the graph of Example~\ref{example1} (a).

If $n=1$, then $\Gamma$ is a straight line with two points glued together,
hence $\G$ is necessarily
the graph of Example~\ref{example1} (b).

Finally, if $n>1$,
since $v=\varphi_\mu$ is an even and strictly radially decreasing function, we have
\begin{equation*}
\forall z_1,z_2>0,\quad
\gamma_1(z_1)=\gamma_2(z_2)=x_j\quad\Rightarrow\quad
v(z_1)=v(-z_2)
\quad\Rightarrow\quad
z_1=z_2.
\end{equation*}
Since the $\gamma_i$'s are parameterized by arclength, this shows that
for every $j\in\{1,\ldots,n\}$ the distance $a_j$ of $x_j$ from $x_0$, measured
along $\gamma_1$, is the same as the distance
measured along $\gamma_2$. And this forces $\G$
to be isometric to the graph in  Example~\ref{example1} (c).
\end{proof}

\section{Proof of the existence results} \label{sec:baffo}

Throughout this section, $\G$ is the graph described in Theorem~\ref{teoremdue}
(see Figure~\ref{FigT}). In dealing with \eqref{minE2}, however,
it is convenient to let $u=(\phi,\psi)$,
and
identify each $u\in H^1(\G)$ with a pair of functions $\phi,\psi$,
with $\phi\in H^1(\R)$, $\psi\in H^1(I)$ with $I=[0,\ell]$, satisfying the continuity
condition $\phi(0)=\psi(0)$: here, of course, the interval $I=(0,\ell)$ represents
the pendant of $\G$, while $\R$ represents the union of its two half-lines.
Then \eqref{minE2} is equivalent to the minimization problem
\begin{equation*}
\min\Bigl( E(\phi,\R)+E(\psi,I)\Bigr),\qquad
\phi\in H^1(\R),\quad \psi\in H^1(I)
\end{equation*}
subject to the constraints
\begin{equation}
\label{newc}
\phi(0)=\psi(0),\quad
\int_{-\infty}^\infty |\phi(x)|^2\,dx
+\int_0^\ell |\psi(x)|^2\,dx
=\mu.
\end{equation}

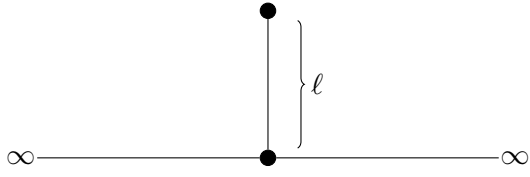
\begin{figure}[t]
\begin{center}
\begin{tikzpicture}
[scale=1.3,style={circle,
              inner sep=0pt,minimum size=7mm}]
\node at (-2.5,0) [infinito]  (L) {$\infty$};
\node at (0,0) [nodo] (O) {};
\node at (2.5,0) [infinito]  (R) {$\infty$};
%
\node at (0,1.5) [nodo] (V) {};

\draw[-] (L) -- (O);
\draw[-] (O) -- (R);
\draw [-] (O) -- (V) ;
\draw (0.5,0.75) node {$\ell$};
\draw [decorate,decoration=brace] (0.3,1.4) -- (0.3,0.1) ;

 %
\end{tikzpicture}
\end{center}
\caption{A straight line with a pendant of length $\ell>0$ attached to it.}
\label{FigT}
\end{figure}
\noindent

As mentioned in the Introduction,
the key
 to the proofs of Theorems \ref{teoremdue} and \ref{teoremdueB} is the
following lemma, that shows how the value of $E(u,\G)$ can be reduced,
for any $u\in H^1_\mu(\G)$, by a proper \emph{hybrid rearrangement} operation,
tailored to the topology of $\G$.

\newcommand\uorig{u}
\newcommand\ufinale{\widetilde u}
\newcommand\origphi{\phi}
\newcommand\origpsi{\psi}
\newcommand\finalphi{\widetilde\phi}
\newcommand\finalpsi{\widetilde\psi}

\begin{lemma}[hybrid rearrangement]\label{lemmaH} Assume that $\uorig\in H^1_\mu(\G)$,
with $\uorig> 0$ and $\mathop{\rm meas}(\{\uorig=t\})=0$ for every $t>0$.
Then
there exists
$\ufinale\in H^1_\mu(\G)$, $\ufinale=(\finalphi,\finalpsi)$,  with the following properties:
\begin{itemize}
\item[(i)] $\finalphi:\R\to\R$ is even and radially decreasing;
\item[(ii)] $\finalpsi:I\to\R$ is increasing, so that $\min \finalpsi=\finalpsi(0)=\finalphi(0)=\max \finalphi$;
\item[(iii)] $E(\ufinale,\G)\leq E(\uorig,\G)$ and, if equality occurs,
then
letting $\uorig=(\origphi,\origpsi)$ we necessarily have that  $\origpsi$ is increasing on
$[0,\ell]$ and $\min \origpsi=\max \origphi$.
\end{itemize}
\end{lemma}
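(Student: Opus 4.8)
The plan is to build $\ufinale$ explicitly from the decreasing rearrangement $u^*$ of $u$, splitting the range of $u^*$ at a single threshold and sending the \emph{top} part to the pendant (monotonically) and the \emph{bottom} part to the line (symmetrically). Since $\uorig>0$ and all level sets of $u$ are null, the distribution function $\rho$ is continuous and strictly decreasing on $(0,M)$, where $M=\max_\G u$, so there is a unique $\tau$ with $\rho(\tau)=\ell$; equivalently $\tau=u^*(\ell)$. I then set $\finalpsi(x):=u^*(\ell-x)$ on $I=[0,\ell]$, which is increasing with $\finalpsi(0)=u^*(\ell)=\tau$ and $\finalpsi(\ell)=u^*(0)=M$, and $\finalphi(x):=u^*(\ell+2|x|)$ on $\R$, which is even, radially decreasing, with $\finalphi(0)=\tau$. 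The matching $\finalphi(0)=\finalpsi(0)=\tau$ gives continuity at the junction, so $\ufinale=(\finalphi,\finalpsi)\in H^1(\G)$ and properties (i)--(ii) hold by construction. The changes of variable $s=\ell-x$ on the pendant and $y=\ell+2|x|$ on the line show that $\ufinale$ is equimeasurable with $u^*$, hence with $\uorig$; in particular $\ufinale\in H^1_\mu(\G)$ and $\|\ufinale\|_{L^p(\G)}=\|\uorig\|_{L^p(\G)}$, so everything reduces to comparing Dirichlet integrals.

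For that comparison I would use the coarea representation underlying Proposition~\ref{proprear}: writing $N(t)$ for the number of preimages and using $\int_\G|u'|^2\,dx=\int_0^M\bigl(\sum_{u(x)=t}|u'(x)|\bigr)\,dt$ together with Cauchy--Schwarz (and $-\rho'(t)=\sum_{u(x)=t}|u'(x)|^{-1}$) yields
\[
\int_\G|u'|^2\,dx\ \ge\ \int_0^M\frac{N(t)^2}{-\rho'(t)}\,dt .
\]
On the other hand $\ufinale$ has exactly one preimage per level $t\in(\tau,M)$ (the monotone pendant branch) and exactly two per level $t\in(0,\tau)$ (the symmetric branch on the line), and being a genuine monotone/symmetric rearrangement it attains equality in Cauchy--Schwarz, so
\[
\int_\G|\ufinale'|^2\,dx=\int_\tau^M\frac{1}{-\rho'(t)}\,dt+\int_0^\tau\frac{4}{-\rho'(t)}\,dt .
\]
Thus (iii) follows once I show $N(t)\ge2$ for a.e. $t\in(0,\tau)$, the bound $N(t)\ge1$ on $(\tau,M)$ being automatic.

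The crux --- and the step I expect to be the main obstacle --- is precisely this lower bound on $N(t)$, which is where the topology of $\G$ enters, and the key claim is that $\tau\le b:=\max_\R\origphi$. Indeed, if $\tau>b$ then $\origphi<\tau$ on all of $\R$, so $\{\uorig>\tau\}$ is contained in the pendant; since $\mis(\{\uorig>\tau\})=\rho(\tau)=\ell$ and $I$ has length $\ell$, $\origpsi>\tau$ a.e. on $I$, and continuity forces $\origpsi\ge\tau$ everywhere, whence $\origpsi(0)\ge\tau$ --- contradicting $\origpsi(0)=\origphi(0)\le b<\tau$. Granting $\tau\le b$, the fact that $\origphi$ is continuous, positive and tends to $0$ at both ends of $\R$, with interior maximum $b$, shows by the intermediate value theorem on each side of the maximum that every level $t\in(0,b)\supseteq(0,\tau)$ has at least two preimages on the line alone; hence $N(t)\ge2$ there, and the two displays combine to give $E(\ufinale,\G)\le E(\uorig,\G)$.

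Finally, for the equality case, $E(\ufinale,\G)=E(\uorig,\G)$ forces equality in both displayed inequalities, hence $N(t)=1$ for a.e. $t\in(\tau,M)$ and $N(t)=2$ for a.e. $t\in(0,\tau)$. If $b>\tau$ there would be levels $t\in(\tau,b)$ with two preimages on the line, contradicting $N(t)=1$; so $b=\tau$. Then $\{\uorig>\tau\}$ lies, up to a null set, in the pendant, which as above forces $\origpsi\ge\tau$ on $I$ and $\min\origpsi=\origpsi(0)=\origphi(0)=\max\origphi=\tau$. On $(\tau,M)$ all preimages of $\uorig$ lie on the pendant and are unique, so $\origpsi$ is a continuous function on $[0,\ell]$ attaining each value of its range $(\tau,M)$ exactly once, hence strictly monotone; since $\origpsi(0)=\tau$ is its minimum, $\origpsi$ is increasing. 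This is exactly the conclusion stated in (iii).
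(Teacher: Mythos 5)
Your proposal is correct and follows essentially the same route as the paper: the threshold $\tau$ with $\mathop{\rm meas}(\{u>\tau\})=\ell$, the rearranged function (your $\widetilde\psi(x)=u^*(\ell-x)$, $\widetilde\phi(x)=u^*(\ell+2|x|)$ is exactly the function the paper builds by rearranging the truncations $u\wedge\tau$ symmetrically and $(u-\tau)^+$ decreasingly and then gluing), the key topological step showing that the super-level set $\{u\ge\tau\}$ cannot be confined to the pendant (hence at least two preimages for a.e.\ level below $\tau$), and the equality analysis forcing $\psi$ increasing with $\min\psi=\max\phi$ are all the same. The only difference is presentational: you re-derive the rearrangement inequalities from the coarea/Cauchy--Schwarz representation, whereas the paper invokes them as Proposition~\ref{proprear} applied separately to the two truncations.
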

\begin{proof}
By the assumptions on $\uorig$, it is easy to see that there exists $\tau>0$
such that
\begin{equation}
\label{chooset}
\mathop{\rm meas}(\{\uorig>\tau\})=\ell.
\end{equation}
The idea of the proof is to rearrange the portion of $\uorig$ above $\tau$ increasingly on
$I$, and the portion below $\tau$ symmetrically on $\R$. More precisely, let
\[
f=\uorig\wedge\tau,\qquad
g=(\uorig-\tau)^+,
\]
and observe that $f,g\in H^1(\G)$. Then one can define
$\widehat f$ and $g^*$,
the symmetric
and the decreasing rearrangements of $f$ and $g$, respectively, in such a way that
$\widehat f\in H^1(\R)$ and $g^*\in H^1(\R^+)$ by Remark~\ref{remomega}.
Note that: (i) since $\mathop{\rm meas}(\{g>0\})=\ell$ by \eqref{chooset}, $g^*$ is supported in
$[0,\ell]$; (ii) since $\mathop{\rm meas}(\{f=\tau\})=\ell$ by \eqref{chooset}, $f^*\equiv \tau$
on $[-\ell/2,\ell/2]$. Therefore, if we let
\[
\finalpsi(x)=\tau+g^*(\ell-x)\quad\forall x\in [0,\ell],
\quad\text{and}\quad
\finalphi(x)=
\begin{cases}
\widehat f(x+\ell/2) & \text{if $x\geq 0$,}\\
\widehat f(x-\ell/2) & \text{if $x< 0$,}
\end{cases}
\]
we have $\finalpsi(0)=\tau+g^*(\ell)=\tau$ and $\finalphi(0)=\widehat{f}(\ell/2)=\tau$, and the
first condition in \eqref{newc} is satisfied. Moreover,
as $\finalphi\in H^1(\R)$ and $\finalpsi\in H^1(I)$, letting $\ufinale=(\finalphi,\finalpsi)$ we have that $\ufinale\in H^1(\G)$.
In fact, since by construction for $t>0$
\begin{align*}
&\mathop{\rm meas}(\{\ufinale>t\})=
\mathop{\rm meas}(\{\finalphi>t\})
+
\mathop{\rm meas}(\{\finalpsi>t\})=\\
&\mathop{\rm meas}\bigl(\{\widehat f>t\}\setminus [-\ell/2,\ell/2]\bigr)
+
\mathop{\rm meas}\bigl(\{\tau+g^*>t\}\cap [0,\ell]\bigr)=\\
&\mathop{\rm meas}\bigl(\{\tau> \uorig\wedge \tau >t\}\bigr)
+
\mathop{\rm meas}\bigl(\{(\uorig-\tau)^+>(t-\tau)^+\}\bigr)
=\mathop{\rm meas}(\{\uorig>t\}),
\end{align*}
$\ufinale$ and $\uorig$ are equimeasurable and therefore also $\ufinale\in H^1_\mu(\G)$
(i.e., \eqref{newc} is fully satisfied). For the same reason,
\begin{equation}
\label{dis0}
\| \ufinale\|_{L^p(\G)}^p=
\int_{-\infty}^\infty |\finalphi(x)|^p\,dx
+\int_0^\ell |\finalpsi(x)|^p\,dx
=
\| \uorig\|_{L^p(\G)}^p.
\end{equation}
Now we claim that
\begin{equation}
\label{dis1}
\int_{-\infty}^\infty |\finalphi'(x)|^2\,dx
=
\int_{-\infty}^\infty |\widehat{f}'(x)|^2\,dx
\leq
\int_{\G} |(\uorig\wedge \tau)'|^2\,dx.
\end{equation}
The equality is immediate from the definition of $\finalphi(x)$,
since $\widehat f\equiv \tau$ on $[-\ell/2,\ell/2]$, while the inequality follows
from \eqref{eq:ens2}, as soon as we prove that
\begin{equation}
\label{eq:defN2}
N(t):=\#\{x\in \G\,:\,\, \uorig(x)\wedge \tau=t\}\geq 2
\quad\text{for a.e. $t\in (0,\tau)$.}
\end{equation}
Indeed, by \eqref{chooset}, the closed set $\{\uorig\geq\tau\}$ cannot be
\emph{strictly} contained inside the pendant edge of $\G$, since the latter
has measure $\ell$: therefore, we have that $\uorig(x)\geq\tau$ for at least
one point $x_0$ in the union of the two half-lines of $\G$ (possibly,
only at the triple junction). But this makes \eqref{eq:defN2} obvious,
even for \emph{every} $t\in (0,\tau)$,
since $\uorig\to 0$
as one approaches the two points at infinity of $\G$, both ways from $x_0$.

Similarly, since $g^*$ is supported in $[0,\ell]$ and $\finalpsi'(x)=(g^*)'(\ell-x)$,
\begin{equation}
\label{dis2}
\int_{0}^\ell |\finalpsi'(x)|^2\,dx
=
\int_{0}^\infty |(g^*)'(x) |^2 \,dx
\leq
\int_{\G} \left|\bigl((\uorig-\tau)^+\bigr)'\right|^2\,dx
\end{equation}
where now \eqref{eq:ens} is used. Taking the sum with \eqref{dis1}, we find
\[
\int_{\G} |\ufinale'(x)|^2\,dx\leq
\int_{\G} |\uorig'(x)|^2\,dx,
\]
which combined with \eqref{dis0} shows that $E(\ufinale,\G)\leq E(\uorig,\G)$.
Finally, if equality holds, then it must hold in \eqref{dis2} as well: from
the discussion after \eqref{eq:ens}, then, we see that the function $(\uorig-\tau)^+$
must have exactly \emph{one preimage} $x_t\in \G$, for almost every value
$t>0$ that it achieves on $\G$. Now split $\uorig=(\origphi,\origpsi)$, with
$\origphi\in H^1(\R)$ and $\origpsi\in H^1(I)$: if $M=\max \origphi >\tau$ then,
as $\origphi(x)\to 0$ when $|x|\to\infty$,
for \emph{every} $t\in (\tau,M)$ we would have $\# \origphi^{-1}(t)\geq 2$, hence
$(\uorig-\tau)^+$ would achieve \emph{every} value $t\in (\tau,M)$ \emph{at least twice}
on $\G$, which would be a contradiction. We deduce that $\origphi\leq \tau$,
hence \eqref{chooset} implies that the set $\{\uorig>\tau\}$ is contained in the
pendant edge of $\G$: indeed, since the pendant itself has measure $\ell$, we
obtain that $\origpsi\geq \tau$ on $I$, so that $\origpsi\geq\origphi$ and
$(\uorig-\tau)^+$ is supported on the pendant edge of $\G$. Since
$(\uorig-\tau)^+$ has one preimage for almost every value, we deduce that $\origpsi$ is monotone,
and this completes the proof.
\end{proof}

\begin{proof}[Proof of Theorem \ref{teoremdue}] To prove \eqref{infminore},
consider the soliton $\phi_\mu$, and
define
\[
\origphi(x):=\phi_\mu(|x|+\ell/2)\quad\forall x\in\R,\qquad
\origpsi(x):=\phi_\mu(x-\ell/2)\quad\forall x\in [0,\ell].
\]
It is clear that $\origphi\in H^1(\R)$, $\origpsi\in H^1([0,\ell])$, and \eqref{newc} is
satisfied. Then the function $\uorig=(\origphi,\origpsi)$ belongs to $H^1_\mu(\G)$, and clearly
$E(\uorig,\G)=E(\phi_\mu,\R)$. Moreover, $\uorig>0$ and $\mathop{\rm meas}(\{\uorig=t\})=0$
for every $t$, so that we can apply Lemma~\ref{lemmaH} and
obtain a new function $\ufinale\in H^1_\mu(\G)$ as claimed there: from (iii),
we see that $E(\ufinale,\G)<E(\phi,\G)$, because our $\origpsi$ is \emph{not} increasing
in $[0,\ell]$. Then we have
\[
E(\ufinale,\G)<E(\phi,\G)=E(\phi_\mu,\R),
\]
and this proves \eqref{infminore} since $\phi_\mu$ achieves the minimum in \eqref{infminore}.

To prove that the infimum in \eqref{infminore} is attained, let $u_n=(\phi_n,\psi_n)$
be a minimizing sequence, i.e.
\begin{equation}
\label{minseq}
\lim_n E(u_n,\G)=
\lim_n \bigl(E(\phi_n,\R)+E(\psi_n,I)\bigr)=\inf_{v\in H^1_\mu(\G)} E(v,\G).
\end{equation}
As $E(u_n,\G)=E(|u_n|,\G)$, we may assume that $u_n\geq 0$ and
even more, by a density argument using e.g. piecewise linear functions,
that each $u_n$ satisfies the assumptions
of Lemma~\ref{lemmaH}. Thus,
we may assume that $\phi_n,\psi_n$
satisfy properties as in (i) and (ii) therein, so that in particular
\begin{equation}
\label{unraddec}
\text{$\phi_n$ is even and radially decreasing.}
\end{equation}
Moreover from \eqref{coerc} we see that $\{u_n\}$ is bounded in $H^1(\G)$, hence
there exists a $u\in H^1(\G)$, $u=(\phi,\psi)$, such that up to subsequences
\begin{equation}
\psi_n\rightharpoonup \psi\quad\text{weakly in $H^1(I)$},\qquad
\phi_n\rightharpoonup \phi\quad\text{weakly in $H^1(\R)$.}
\end{equation}
Since $I=[0,\ell]$ is bounded, $\psi_n\to \psi$ strongly in $L^p(I)$, so that
\begin{equation}
\label{Evn}
E(\psi,I)\leq \liminf_n E(\psi_n,I)
\end{equation}
because the positive term in \eqref{NLSe} is lower semicontinuous.
Concerning $\phi_n$, clearly $\phi_n\to \phi$ uniformly on compact sets:
however, combining \eqref{unraddec} with the fact that $\phi(x)\to 0$
when $|x|\to\infty$, one can easily check that
$\phi_n\to \phi$ in $L^\infty(\R)$. Since $\{\phi_n\}$ is bounded in $L^2(\R)$
and $p>2$,
by interpolation we infer that also $\phi_n\to \phi$ strongly in $L^p(\R)$,
so that
\begin{equation*}
E(\phi,\R)\leq \liminf_n E(\phi_n,\R).
\end{equation*}
Combining with \eqref{Evn} and \eqref{minseq}, we have that
\begin{equation}
\label{sotto}
E(u,\G)=
E(\phi,\R)+E(\psi,I)\leq \inf_{v\in H^1_\mu(\G)} E(v,\G)<0
\end{equation}
(the last inequality follows from \eqref{infminore} and \eqref{ensol}).
Observe that, since $u_n\in H^1_\mu(\G)$, one has $\| u\|_{L^2(\G)}^2\leq\mu$
by semicontinuity: if the inequality were strict, one would have
$\sigma u\in H^1_\mu$ for a proper constant $\sigma>1$ (observe
$u\not\equiv 0$, since $E(u,\G)<0$ by \eqref{sotto}). But then, since
$\sigma^p>\sigma^2$ and $E(u,\G)<0$, we would find
\[
E(\sigma u,\G)=
\frac {\sigma^2} 2 \| u' \|^2_{L^2 (\G)}
- \frac {\sigma^p} p  \| u \|^p_{L^p (\G)}
<
\sigma^2 E(u,\G)
<
E(u,\G)
\]
which combined with \eqref{sotto} would be a contradiction, since now
$\sigma u\in H^1_\mu(\G)$. This shows that, $\| u\|_{L^2(\G)}^2=\mu$,
so $u\in H^1_\mu(\G)$ and $u$ is a minimizer.
\end{proof}

\begin{proof}[Proof of Theorem~\ref{teoremdueB}, (i)-(ii)] Let $\uorig\in H^1_\mu(\G)$ be a minimizer.
From (iii) in Proposition~\ref{necess}, we may assume that $\uorig>0$ on $\G$. Moreover, splitting
$\uorig=(\origphi,\origpsi)$ and letting $a=\origphi(0)$, $m=\| \origphi\|_{L^2(\R)}^2$, we see that $\origphi$
necessarily solves the double-constrained problem \eqref{pretta}, and therefore $\origphi$
is one of the functions described in Theorem~\ref{thretta} (i)--(iii): in any
case, one has $\mathop{\rm meas}(\{\origphi=t\})=0$ for every $t>0$. And the same is true
for $\origpsi$: indeed, as $\origpsi$ solves an ODE like \eqref{eulero} in $(0,\ell)$, all its level
sets have measure zero unless $\origpsi$ is constant on $[0,\ell]$. But if $\origpsi$ were constant
then $\origpsi(0)=\origpsi(\ell)$, and the function
\[
\phi:\R\to\R,\qquad
\phi(x)=\begin{cases}
\origphi(x) &\text{if $x<0$,}\\
\origpsi(x) &\text{if $0\leq x\leq \ell$,}\\
\origphi(x-\ell) &\text{if $x>\ell$,}
\end{cases}
\]
would therefore belong to $H^1_\mu(\R)$, with $E(\phi,\R)=E(\uorig,\G)$. But this would contradict
\eqref{infminore}, since $\uorig$ achieves the infimum there. This shows that $\uorig$ satisfies
the assumptions of Lemma~\ref{lemmaH}: since $E(\uorig,\G)$ cannot be diminished within $H^1_\mu(\G)$,
an \emph{equality} must occur in (iii) of Lemma~\ref{lemmaH}, and this proves that
$\origpsi$ is increasing on $[0,\ell]$ and $\min \origpsi=\max \origphi$. Since $\origphi(0)=\origpsi(0)$, we see
that $\origphi(0)=\max \origphi$, and this rules out the possibility that
$\origphi$ is as in (i) of Theorem~\ref{thretta}.
On the other hand, if $\origphi$ were as in (ii), i.e. $\origphi(x)=\phi_m(x)$,
then the Kirchhoff condition
\eqref{kir} at the triple junction of $\G$, namely
\[
-\origphi'(0^-)+\origphi'(0^+)+\origpsi'(0)=0,
\]
would reduce to $\origpsi'(0)=0$, since $\origphi=\phi_m$ would be of class $C^1(\R)$ with
$\origphi'(0)=0$. But then $\origphi,\origpsi$, other than solving the same ODE \eqref{eulero},
would also
satisfy the same initial conditions $\origphi(0)=\origpsi(0)$, $\origphi'(0)=\origpsi'(0)$: by uniqueness,
$\origpsi$ would then coincide with the restriction of $\origphi=\phi_m$ to $[0,\ell]$, and
since $\phi_m'(\ell)<0$, this would violate the Kirchhoff condition \eqref{kir}
at the tip of the pendant, namely $\origpsi'(\ell)=0$. Therefore, the only possibility is
that $\origphi$ is as in (iii) of Theorem~\ref{thretta}, and this completes the proof.
\end{proof}

\begin{proof}[Proof of Theorem~\ref{teoremdueB}, (iii)]
Given any two different lengths $\ell>\ell'>0$ for the pendant,
let $\G,\G'$ denote the corresponding graphs, and let $\uorig\in H^1_\mu(\G')$, $\uorig>0$, be
a minimizer on $\G'$: proceeding
\emph{exactly} as in the proof of Lemma~\ref{lemmaH},
we shall construct a function $\ufinale\in H^1_\mu(\G)$
such that $E(\uorig,\G')> E(\ufinale,\G)$.
Since $u$ has qualitative properties as in (i) and (ii)
of Theorem~\ref{teoremdueB},
we can certainly find $\tau=\tau(\ell)>0$ satisfying \eqref{chooset}
(now
$\uorig\in H^1_\mu(\G')$ and
$\ell$ is \emph{unrelated to} $\G'$,
but this is irrelevant
to the computations following \eqref{chooset}). Then we proceed
\emph{verbatim} with the construction of
$\ufinale\in H^1_\mu(\G)$ satisfying \eqref{dis0} and \eqref{dis2},
with the proviso that
$\G$ be replaced with  $\G'$ wherever $u$ is involved (the same applies
to
\eqref{eq:defN2} and \eqref{dis1}, hereafter).
 Since the structure of $u$ is as in (i)-(ii)
of Theorem~\ref{teoremdueB}, and the pendant of $\G'$ has length $\ell'$, from $\ell>\ell'$
and \eqref{chooset} we see that $f=u\wedge\tau$ has exactly two preimages for every
value $t\in (0,\tau)$ so that also \eqref{eq:defN2} and hence \eqref{dis1} are satisfied,
and  the same is true for $g=(u-\tau)^+$ for all those values $t>0$
\emph{small enough}, so that the inequality in \eqref{dis2}
is now \emph{strict}.
Therefore, we have
$E(\uorig,\G')> E(\ufinale,\G)$
hence the infimum in \eqref{infminore}, as a function of $\ell$,
 is strictly decreasing.
\end{proof}


\begin{thebibliography}{99}

\bibitem{acfn12a}
R.~Adami, C.~Cacciapuoti, D.~Finco, D.~Noja.
\emph{On the structure of critical energy levels for the cubic focusing {NLS} on star graphs.}
 J. Phys. A \textbf{45} (2012), no. 19, 192001, 7pp.

\bibitem{acfn12b}
R.~Adami, C.~Cacciapuoti, D.~Finco, D.~Noja.
\emph{Variational properties and orbital stability of standing waves for {NLS} equation on a star graph.}
arXiv:1206.5201 (2012).

\bibitem{acfn14}
R. Adami, E. Serra, P. Tilli.
{\em Lack of ground state for NLS on bridge--like graphs.}
arXiv:1404.6973 (2014).

\bibitem{ali}
F.~Ali Mehmeti.
{\em Nonlinear waves in networks.} Akademie Verlag, Berlin, 1994.

\bibitem{balakrishnan}
V. K.~Balakrishnan, K.~Ranganathan.
{\em A textbook of graph theory.}
Second edition. Universitext. Springer, New York, 2012.

\bibitem{berkolaiko}
G.~Berkolaiko, P.~Kuchment.
{\em Introduction to quantum graphs.}
Mathematical Surveys and Monographs, 186. AMS, Providence, RI, 2013.

\bibitem{bollobas}
B.~Bollobas.
\emph{Modern Graph Theory.}
GTM 184, Springer Verlag, New York, 1998.

\bibitem{bona}
J.~Bona, R.~Cascaval.
\emph{Nonlinear dispersive waves on trees.} Can. J. App. Math. {\bf 16} (2008), no. 1, 1--18.

\bibitem{brezis}
H.~Brezis.
\emph{Functional Analysis,
Sobolev Spaces and Partial Differential Equations.} Universitext, Springer Verlag, New York, 2011.

\bibitem{cacciapuoti}
C.~Cacciapuoti, D.~Finco, D.~Noja.
\emph{Topology induced bifurcations for the NLS on the tadpole graph.}
arXiv:1405.3465 (2014).


\bibitem{stringari}
F.~Dalfovo, S.~Giorgini, L.P.~Pitaevskii, S.~Stringari.
{\em Theory of Bose-Einstein condensation in trapped gases.}
Rev. Mod. Phys. {\bf 71} (1999) 463--511.

\bibitem{Friedlander}
L.~Friedlander.
\emph{Extremal properties of eigenvalues for a metric graph.}
Ann. Inst. Fourier (Grenoble) {\bf 55} (2005), no. 1, 199--211

\bibitem{smi}
S. Gnutzman, U. Smilansky, S. Derevyanko.
\emph{Stationary scattering
from a nonlinear network}. Phys. Rev. A
{\bf 83} (2001), 033831, 6pp.



\bibitem{Kawohl}
B.~Kawohl.
{\em Rearrangements and convexity of level sets in PDE.}
Lect. Notes in Math. 1150, Springer Verlag, Berlin, 1985.

\bibitem{kevrekidis}
P.G.~Kevrekidis, D.J.~Frantzeskakis, G.~Theocharis, I.G.~Kevrekidis.
{\em Guidance of matter waves through Y-junctions.}
Phys. Lett. A {\bf 317} (2003), 513--522.

\bibitem{Kuchment}
P.~Kuchment.
{\em Quantum graphs I. Some basic structures.}
Waves in Random Media {\bf 14} (2004), no. 1, S107--S128.

\bibitem{noja}
D.~Noja.
{\em Nonlinear Schr\"odinger equation on graphs: recent results and open problems.}
Philos. Trans. R. Soc. Lond. Ser. A Math. Phys. Eng. Sci.  {\bf 372} (2007), 20130002, 20 pp.

\bibitem{matrasulov}
Z.~Sobirov, D.~Matrasulov, K.~Sabirov, S.~Sawada, K.~Nakamura.
\emph{Integrable nonlinear {S}chr\"{o}dinger equation on simple networks: connecion formula at vertices}.
Phys. Rev. E \textbf{81} (2010), no. 6, 066602, 10 pp.

\bibitem{vidal}
E.J.G.~Vidal, R.P.~Lima, M.L.~Lyra.
{\em Bose-Einstein condensation in the infinitely ramified star and wheel graphs.}
Phys. Rev. E  {\bf 83}  (2011), 061137.


\end{thebibliography}
\end{document}